\def\NAT@spacechar{~}
\crefname{figure}{Figure}{Figures}
\Crefname{figure}{Figure}{Figures}
\newtheorem{definition}{Definition}[section]
\newtheorem{claim}{Claim}
\newtheorem{proposition}[definition]{Proposition}
\newtheorem{theorem}[definition]{Theorem}
\newtheorem{lemma}[definition]{Lemma}
\newtheorem{conjecture}[definition]{Conjecture}
\newtheorem{question}[definition]{Question}
\numberwithin{equation}{section}
\newcommand{\comment}[1]{}
\newcommand{\cF}{\mathcal{F}}
\newcommand{\cP}{\mathcal{P}}
\newcommand{\cQ}{\mathcal{Q}}
\newcommand{\cR}{\mathcal{R}}
\newcommand{\cS}{\mathcal{S}}
\newcommand{\cT}{\mathcal{T}}
\newcommand{\ora}{\overrightarrow}
\newcommand{\init}{\text{init}}
\newcommand{\ter}{\text{ter}}
\newcommand{\inter}{\text{int}}
\renewcommand{\epsilon}{\varepsilon}
\newcommand\Clemmabranches{10^4k^{3}}
\newcommand\Clemmapaths{10^{13}k^{4}}
\newcommand\CWibound{12k^{22}\ell^2}
\newcommand\Clbound{3k + \Clemmabranches{} + 2\cdot\Clemmapaths{}}
\newcommand{\COMMENT}[1]{}
\renewcommand{\COMMENT}[1]{\footnote{\textcolor{blue!70!black}{#1}}} 
\newcounter{kccounter}
\title{$(2k+1)$-connected tournaments with large minimum out-degree are $k$-linked}   
\author[A.~Gir\~{a}o]{Ant\'onio Gir\~{a}o}
\address[Ant\'onio Gir\~{a}o]{School of Mathematics, University of Birmingham, 
Edgbaston, Birmingham, B15 2TT, United Kingdom.
}
\email{giraoa@bham.ac.uk}
\author[K.~Popielarz]{Kamil Popielarz}
\email{kamil.popielarz@gmail.com}
\author[R.~Snyder]{Richard Snyder}
\address[Richard Snyder]{Karlsruhe Institute of Technology, Karlsruhe, Germany.}
\email{richard.snyder@kit.edu}
\thanks{The first author wishes to acknowledge support by the EPSRC, grant. no. EP/N019504/1.}
\date{\today}
\begin{document}
\onehalfspacing

\begin{abstract}
 Pokrovskiy conjectured that there is a function $f: \mathbb{N} \rightarrow \mathbb{N}$ such that any $2k$-strongly-connected tournament with minimum out and in-degree at least $f(k)$ is $k$-linked. In this paper, we show that any $(2k+1)$-strongly-connected tournament with minimum out-degree at least some polynomial in $k$ is $k$-linked, thus resolving the conjecture up to the additive factor of $1$ in the connectivity bound, but without the extra assumption that the minimum in-degree is large. Moreover, we show the condition on high minimum out-degree is necessary by constructing arbitrarily large tournaments that are $(2.5k-1)$-strongly-connected but are not $k$-linked. 

\end{abstract}
\maketitle

\onehalfspacing
\section{Introduction}

This paper is concerned with the relation between two central notions of connectivity in tournaments: strong-connectivity and linkedness.
A directed graph is \emph{strongly-connected} if for any pair of distinct vertices $x$ and $y$ there is a directed path from $x$ to $y$, and is strongly $k$-connected if it has at least $k + 1$ vertices and if it remains strongly-connected upon the removal of any set of at most $k-1$ vertices. We shall omit the use of the word
`strongly' with the understanding that we always mean strong connectivity.  A directed graph $G$ is $k$-\emph{linked} if $|V(G)| \ge 2k$ and
for any two disjoint sets of vertices $\{x_1, \ldots , x_k\}$ and $\{y_1, \ldots, y_k\}$ there are pairwise vertex disjoint directed 
paths $P_1, \ldots , P_k$ such that $P_i$ has initial vertex $x_i$ and terminal vertex $y_i$ for every $i \in [k]$. Thus, $G$ 
is $1$-linked if and only if it is connected and nontrivial. Since linkedness is a stronger notion than connectivity, it is natural to ask if a high enough connectivity is sufficient to guarantee linkedness. This is too much to hope for in general, as shown by Thomassen~\cite{Thomassen_construct}, who constructed digraphs of arbitrarily large connectivity, but which are not even $2$-linked. This is in stark contrast to the situation for undirected graphs: Bollob{\'a}s and Thomason~\cite{BollobasThomason} showed that any $22k$-connected graph is $k$-linked and using this result they confirmed a conjecture of Mader~\cite{MaderConjecture} and of Erd\H{o}s and Hajnal~\cite{ErdosHajnal} related to the smallest average degree that guarantees a subdivision of a clique on $k$ vertices (this result was also proved independently by Koml{\'o}s and Szemer{\'e}di~\cite{KomlosSzemeredi}). The constant in the connectivity bound in Bollob{\'a}s and Thomason's result has subsequently been improved by Thomas and Wollan~\cite{ThomasWollan}. They showed that $2k$-connectivity suffices for $k$-linkedness provided the graph has average degree at least $10k$. Thomassen~\cite{Thomassen-2-linked} conjectured that any $(2k+2)$-connected graph is $k$-linked, though this is false if the graph does not have sufficiently many vertices (for example, $K_{3k-1}$ minus a matching of size $k$ is a counterexample. It is $(3k-3)$-connected, but not $k$-linked). If the amended conjecture is true, it would in fact be tight (see~\cite{Kawarabayashi-et-al}).

The picture in the directed setting is more positive, however, if we restrict our attention to \emph{tournaments}, those directed graphs obtained by orienting each edge of a complete graph in precisely one direction. Indeed, Thomassen~\cite{Thomassen_tourn} was the first to find a function $g(k)$ such that any $g(k)$-connected tournament is $k$-linked. The initial bounds on this function were poor: Thomassen proved the above result with $g(k) = 2^{Ck\log k}$, but there came a series of two major improvements. First, K\"uhn, Lapinskas, Osthus, and Patel~\cite{Lap_Kuhn_Osthus_Patel} proved that it suffices to take $g(k) = 10^4 k\log k$ and conjectured that one could remove the logarithmic factor. Pokrovskiy~\cite{Pokrovskiytourn}, resolving this conjecture, showed that any $452k$-connected tournament is $k$-linked. Kang and Kim~\cite{KangKim} proved an extension of this result, namely, that there exists an absolute constant $C$ such that any $Ck$-connected tournament is $k$-linked, where the paths witnessing $k$-linkedness have prescribed lengths (provided the lengths are sufficiently large). Pokrovskiy went on to conjecture that one could push `$452$' down to `$2$' as long as the tournament has large minimum in/out-degree:

\begin{conjecture}[Pokrovskiy~\cite{Pokrovskiytourn}]\label{conj:pokrovskiy}
There is a function $f(k)$ such that any $2k$-connected tournament $T$ with $\delta^0(T) \ge f(k)$ is $k$-linked, where $\delta^0(T) = \min\{\delta^+(T), \delta^-(T)\}$.
\end{conjecture}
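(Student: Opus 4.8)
The plan is to prove a statement slightly weaker than \cref{conj:pokrovskiy}: that $(2k+1)$-connectivity together with $\delta^+(T)\ge \Coutdegreebound$ already forces $k$-linkedness, which recovers the conjecture up to the additive constant in the connectivity and removes the hypothesis on the in-degree. Fix disjoint terminal sets $\{x_1,\dots,x_k\}$ and $\{y_1,\dots,y_k\}$. The overall idea follows the by-now-standard two-stage scheme of Thomassen, K\"uhn--Lapinskas--Osthus--Patel and Pokrovskiy: first build one robust, highly structured ``hub'' $\cH$ inside $T$ through which many pairwise vertex-disjoint paths can be rerouted at will; then attach each $x_i$ and each $y_i$ to $\cH$ by short vertex-disjoint paths; and finally complete the $k$ linking paths using the flexibility of $\cH$. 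The point that lets us push the connectivity down near the trivial bound $2k$ is that the attaching step should cost us only in the out-direction (where the large minimum out-degree gives room), while in the in-direction we work with the bare $(2k+1)$ vertex-disjoint paths guaranteed by Menger's theorem.

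First I would isolate the hub. Using the high connectivity and, on the out-side, the large minimum out-degree, one finds a set $W$ of $\CWibound$ vertices (for a suitable auxiliary parameter $\ell$) carrying a family of internally disjoint paths with roughly $\Clemmabranches$ branch points and total path budget about $\Clemmapaths$, arranged so that (i) any small collection of ``requests'' — pairs of entry/exit vertices on the boundary of $W$ — can be simultaneously realised by internally disjoint paths through $W$, and (ii) $W$ absorbs only a controlled number of vertices used elsewhere. The construction of such an object is essentially a greedy/pigeonhole argument on out-neighborhoods: repeatedly pass to a large subset of vertices with large common out-neighborhood, then splice these layers together using connectivity into a branching structure; the bookkeeping for this is exactly what the large polynomial $\Coutdegreebound$ pays for.

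Next I would route the terminals to the hub. For each $x_i$, greedily grow a short out-path from $x_i$; since $\delta^+(T)$ is a large polynomial in $k$ while we need only avoid the $O(k\cdot\mathrm{poly}(k))$ vertices already committed, we can reach the boundary of $W$ keeping all $x_i$-paths pairwise disjoint and disjoint from the interior of $\cH$. For the $y_i$ we cannot symmetrically use in-neighborhoods; instead we invoke $(2k+1)$-connectivity to obtain $2k+1$ disjoint paths from (a suitable part of) $\cH$ to $\{y_1,\dots,y_k\}$ — enough to hit every $y_i$ even after discarding those blocked by the $x_i$-paths — and then, inside $\cH$, permute the loose ends so that the path arriving at the port of $W$ labelled for $x_i$ is joined to the path leaving $W$ headed for $y_i$. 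Property (i) of the hub makes this reshuffle possible, and the $(2k+1)$st path is the slack that lets the argument survive one extra collision.

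The hard part will be the simultaneous control in the $y$-direction without a degree hypothesis: all $y_i$ must be reached by vertex-disjoint paths that, in addition, arrive at the correct boundary ports of $W$ and avoid everything already used, and there is essentially no local room to spare. I expect the main technical work to be a careful Menger/flow argument — splitting off one unit of connectivity at a time while maintaining that what remains stays $2k$-connected relative to the as-yet-unserved terminals — together with showing that $W$ is flexible enough to undo any resulting mismatch. A secondary obstacle is keeping the buffers disjoint: the terminal-attaching paths, the $2k+1$ Menger paths to the $y_i$, and the internal hub paths together consume $\mathrm{poly}(k)$ vertices, so one must choose $\ell$ and the hub size so that $\CWibound$ and $\Coutdegreebound$ dominate this total — precisely the role of the constants fixed at the outset.
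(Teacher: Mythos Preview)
Your outline diverges from the paper's proof in ways that matter, and at least one step as written cannot work.

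First, the Menger step. You propose to ``invoke $(2k+1)$-connectivity to obtain $2k+1$ disjoint paths from (a suitable part of) $\cH$ to $\{y_1,\dots,y_k\}$''. This is impossible: vertex-disjoint paths ending in a $k$-element set number at most $k$. The paper's device is different and quite delicate: it applies Menger in $T\setminus X$ (still $(k+1)$-connected) to obtain exactly $k+1$ disjoint paths from an origin set $O$ to $Y\cup\{v\}$, where $v$ is an auxiliary \emph{special vertex} chosen inside the structure. That single extra path is not ``slack against one collision''; it is used constructively in a rerouting procedure (Lemma~\ref{lem:non-sub-intersect}) that repeatedly redirects the special path to free up vertices in the non-subdivision sets. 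Your sketch has no analogue of this mechanism.

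Second, the architecture of the hub. You build a single set $W$ and then try to grow short out-paths from each $x_i$ into $W$. Large $\delta^+$ lets you avoid a bounded set while growing, but gives no reason the growing path ever hits $W$; and if instead you appeal to connectivity here, you have already spent connectivity you will need later for the $y_i$. The paper sidesteps this entirely: it places $k$ separate pieces $S_i\subset W_i\subset N^+(x_i)$, so the link $x_i\to S_i$ is a single edge and costs nothing. The real work is then a structural lemma (Lemma~\ref{lem:structure}) showing that these $k$ pieces can be taken to form a $(k,\ell)$-good family: either a $T_2\ora{K}_\ell$ sits on $S_i$, or the non-subdivision $S_i$'s are pairwise completely oriented one way. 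An auxiliary semicomplete digraph on $[k]$ and its Hamiltonian paths then let every $x_i$ reach a common origin $O$ through free vertices. Your ``greedy/pigeonhole on out-neighbourhoods'' description does not produce this dichotomy, and in particular there is no counterpart to the subdivision/non-subdivision split that the paper exploits in Lemmas~\ref{lem:subdivision-intersect}--\ref{lem:non-sub-intersect} to control how the Menger paths interact with the structure.

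In short: the order of operations (structure inside $N^+(x_i)$ first, Menger to $Y\cup\{v\}$ second, then reroute) and the $(k,\ell)$-good family are the ideas doing the heavy lifting at connectivity $2k+1$, and your plan is missing both.
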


This conjecture may be viewed as a directed analogue of several results in the undirected setting. In particular, as was mentioned earlier, $2k$-connectivity suffices provided one imposes some density condition on the graph (like large average degree). Here, the natural `density' condition for a tournament is large minimum in/out-degree.

In \Cref{section:constructions}, we construct two families of tournaments that demonstrate that both conditions in \Cref{conj:pokrovskiy} are necessary: we show that for every $k \geq 2$ there exist infinitely many tournaments that are $(2k-1)$-connected with  arbitrarily large minimum in/out-degrees, but which are not $k$-linked. Additionally, for every even $k \geq 6$ there are infinitely many tournaments that are $(2.5k-1)$-connected but are not $k$-linked. 

The first and last authors~\cite{girao-snyder} proved that the statement of Pokrovskiy's conjecture holds with `$2k$' replaced by `$4k$', without the assumption of large minimum in-degree.

\begin{theorem}[Gir\~{a}o, Snyder~\cite{girao-snyder}]\label{thm:old-main}
There is a function $f(k)$ such that any $4k$-connected tournament $T$ with $\delta^+(T) \ge f(k)$ is $k$-linked.
\end{theorem}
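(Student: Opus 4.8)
The plan is to follow the ``route-to-a-core'' strategy common to all known proofs that highly connected tournaments are highly linked (Thomassen~\cite{Thomassen_tourn}, K\"uhn--Lapinskas--Osthus--Patel~\cite{Lap_Kuhn_Osthus_Patel}, Pokrovskiy~\cite{Pokrovskiytourn}): rather than linking the pairs $(x_i,y_i)$ directly, one first builds inside $T$ a \emph{robust linking gadget}, then uses the connectivity of $T$ to deliver the $2k$ terminals $x_1,\dots,x_k,y_1,\dots,y_k$ to this gadget along disjoint paths, and finally completes the linkage inside the gadget. The purpose of the hypothesis $\delta^+(T)\ge f(k)$ is that it lets one manufacture the gadget from the degree condition rather than from the connectivity, so that the connectivity of $T$ is needed only for the delivery step --- a Menger-type routing of $2k$ disjoint paths, for which $4k$ units (that is, $2k$ paths with $2k$ to spare) suffice --- as opposed to the $452k$ of~\cite{Pokrovskiytourn}, where a much larger part of the connectivity is expended in repeatedly locating vertices with large surviving out-neighbourhoods. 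The one elementary fact about tournaments I would lean on throughout is that every vertex $v$ and every set $S$ with $v\notin S$ satisfy $|N^+(v)\cap S|\ge|S|/2$ or $|N^-(v)\cap S|\ge|S|/2$, since these two numbers sum to $|S|$; this is what powers greedy path-building and ``domination'' arguments.

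For the gadget I would use a \emph{domination tower}: pairwise disjoint sets $W_1,\dots,W_\ell$, with $\ell$ and each $|W_i|$ polynomial in $k$, such that every vertex of $W_i$ has at least $p(k)$ out-neighbours in $W_{i+1}$ for a suitable polynomial $p$, all parameters chosen large enough that the tower remains a valid domination tower after the deletion of any $p'(k)$ of its vertices. Such a tower is a good linking gadget: placing ``source ports'' in $W_1$ and ``sink ports'' in $W_\ell$, one joins any $k$ prescribed source ports to any $k$ prescribed sink ports by $k$ vertex-disjoint paths, pushing all $k$ paths forward one level at a time --- Hall's theorem lets one pick, at each level, an unused out-neighbour of each current path in the next level, and the final few levels are used to hit the prescribed sink ports exactly --- and this needs only that the between-level domination exceed $2k$ plus the number of tower vertices already removed. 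The delivery step would then use the $4k$-connectivity of $T$, with the large out-neighbourhoods afforded by $\delta^+(T)\ge f(k)$ supplying the extra routing flexibility, to find disjoint paths carrying $x_1,\dots,x_k$ to $k$ distinct source ports and $y_1,\dots,y_k$ from $k$ distinct sink ports.

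Two points carry essentially all the difficulty. The first is \emph{building the tower without loss}: the obvious greedy construction --- pass to the out-neighbourhood of a well-chosen vertex inside the current level --- roughly halves the level size at each step and so produces only $O(\log|V(T)|)$ levels, far too few, so obtaining $\mathrm{poly}(k)$ levels out of only $\delta^+(T)\ge\mathrm{poly}(k)$ requires a genuinely more careful argument (for instance one that slices the tower out of a median order of a large auxiliary set, so that optimality of the order supplies the domination with no constant factor lost per step). The second, and I expect the real obstacle, is the \emph{interaction between the delivery paths and the tower}: in a tournament of arbitrarily large connectivity the delivery paths cannot in general be taken short, so one cannot simply forbid them from entering the interior of the tower; the tower must instead be made wide enough, and the delivery routed carefully enough, that whatever portion of the tower the delivery consumes, a full sub-tower survives to carry out the internal linkage. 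Making this precise --- in effect a quantitative, tournament-specific refinement of the connecting lemmas of~\cite{Pokrovskiytourn}, now driven by the degree hypothesis in place of connectivity --- is where the bulk of the work lies, and is what determines the polynomial $f(k)$.
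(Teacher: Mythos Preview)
The statement you are attempting is not proved in this paper: Theorem~\ref{thm:old-main} is quoted as prior work from~\cite{girao-snyder}, and the present paper proves instead the stronger Theorem~\ref{thm:main}. So there is no proof here to compare against directly.

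That said, the paper does tell us what the central device of the proof in~\cite{girao-snyder} was --- subdivisions of complete digraphs: ``A central tool in the proof of Theorem~\ref{thm:old-main} was to show that tournaments with sufficiently high minimum out-degree contain subdivisions of complete digraphs,'' with a doubly-exponential $f(k)$. Your domination-tower gadget is a genuinely different object from a $T\ora{K}_k$, and your sketch does not establish that it can be built: you correctly note that the greedy construction fails, and your suggested fix (slicing a tower out of a median order) is only a gesture, not an argument. More seriously, your proposal explicitly leaves unresolved the interaction between the Menger delivery paths and the gadget, which you yourself identify as ``the real obstacle.'' As written this is a plan, not a proof.

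For context, the proof of the stronger Theorem~\ref{thm:main} in the present paper handles exactly the two issues you flag, but via the subdivision route rather than towers. The gadget is the $(k,\ell)$-good family of Definition~\ref{def:good-family}: sets $S_i\subset N^+(x_i)$, each either supporting a $T_2\ora{K}_\ell$ or sitting in a total dominance order with the other non-subdivision sets. The interaction problem --- Menger paths eating the gadget --- is precisely what Lemmas~\ref{lem:subdivision-intersect}, \ref{lem: subd-paths-intr}, and~\ref{lem:non-sub-intersect} resolve, by rerouting arguments that exploit minimality of the path system together with the short internal paths of the subdivision. That rerouting machinery is the substantive content; nothing in your outline supplies a substitute for it.
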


Our main result improves the above result in two ways, and nearly resolves \Cref{conj:pokrovskiy} in a stronger form. First, we are able to reduce the connectivity bound to $2k+1$. Second, we only impose a condition on the minimum out-degree, and the bound we obtain is significantly better than that obtained in \Cref{thm:old-main}. More precisely, while we proved \Cref{thm:old-main} with $f(k)$ doubly-exponential, our main result shows that we make take $f(k)$ to be a polynomial.

\begin{theorem}\label{thm:main}
There exists a polynomial $f$ such that any $(2k+1)$-connected tournament $T$ with $\delta^+(T) \ge f(k)$ is $k$-linked.
\end{theorem}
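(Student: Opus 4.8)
The plan is to follow the established route to linkedness in tournaments: locate inside $T$ a bounded-size, highly structured ``core'', use the global connectivity to route the prescribed terminals to and from this core, and then close up the linkage inside the core. The two new features — connectivity only $2k+1$ rather than the $4k$ of \Cref{thm:old-main}, and a polynomial rather than doubly-exponential degree bound — force the core to be simultaneously small and very robust, and reconciling these is where the real work lies.

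\emph{Step 1: reductions and construction of the core.} Since $|V(T)| \ge \delta^+(T) + 1 \ge f(k) + 1$, we may assume $T$ is large, so low-order obstructions (of the kind that make the amended undirected conjecture delicate) do not arise. Using $\delta^+(T) \ge f(k)$ with $f$ a suitable polynomial, I would extract a set $W \subseteq V(T)$ with $|W|$ polynomial in $k$ — on the order of $\CWibound$ for a robustness parameter $\ell$ polynomial in $k$ ($\ell$ of size about $\Clbound$, say) — that is simultaneously (i) \emph{internally robustly linked}: for every prescribed pairing of $k$ designated ``in-ports'' of $W$ with $k$ ``out-ports'', $T[W]$ admits many internally disjoint realizations of the associated $k$-linkage, so that deleting boundedly many vertices cannot destroy it; and (ii) \emph{well-dominating}: every vertex of $T$ reaches, and is reached from, many vertices of $W$ along short paths. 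Large out-degree is exactly what makes (ii) affordable while keeping $W$ small — a greedy or averaging selection produces a small out-dominating set, and only a \emph{polynomial} (not exponential) number of further rounds, each building a controlled family of branchings and linking paths in $T$ — the constants $\Clemmabranches$ and $\Clemmapaths$ presumably bounding these — upgrades it into the robust core.

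\emph{Step 2: routing the terminals.} Given disjoint $\{x_1,\dots,x_k\}$ and $\{y_1,\dots,y_k\}$, I would use $(2k+1)$-connectivity to produce $2k$ pairwise vertex-disjoint paths — $k$ of them leading from the $x_i$ into $W$ and $k$ of them leading from $W$ out to the $y_j$ — internally disjoint from a small reserved part of $W$. These arise from a Menger-type argument in the auxiliary digraph obtained by contracting the (strongly connected) core; the extra ``$+1$'' buys the single vertex of slack one needs, since the $2k$ terminals together with the linking paths nearly saturate the available connectivity, and since a terminal lying in or adjacent to $W$ has to be treated separately. Trimming, one obtains $k$ distinct entry ports and $k$ distinct exit ports of $W$, together with disjoint terminal-to-port and port-to-terminal paths meeting $W$ only at the ports.

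\emph{Step 3: completing inside the core, and the main obstacle.} One would then appeal to the internal robust linkedness of $W$: after discarding the boundedly many vertices of $W$ used as ports or traversed in Step~2, $T[W]$ still contains disjoint paths joining the $i$-th entry port to the exit port assigned to $y_i$ for every $i$ at once, and concatenation produces the required paths from $x_i$ to $y_i$. The hardest point is the feedback between Steps~1 and 2: $W$ must be small enough that $f$ stays polynomial yet robust enough that the $\mathrm{poly}(k)$ vertices consumed by the essentially uncontrolled routing of Step~2 — uncontrolled because only $2k+1$ units of connectivity are on hand — can never block the internal linkage. Pinning down the correct quantitative trade-off among $\ell$, $|W|$ and $f(k)$, keeping all three polynomial, is the crux; by contrast, once such a $W$ is in hand, Steps~2 and 3 should be comparatively routine.
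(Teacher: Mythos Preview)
Your outline is essentially the $4k$ route of \Cref{thm:old-main}, and Step~2 is where it fails at connectivity $2k+1$. With only $2k+1$ on hand you cannot, by any Menger-type argument in a contracted digraph, obtain $k$ paths from the $x_i$ into a fixed core $W$ \emph{and} $k$ paths from $W$ out to the $y_j$, all $2k$ pairwise disjoint: the two families come from two separate applications (delete $Y$ to route $X\to W$; delete $X$ to route $W\to Y$), and nothing forces them to be mutually internally disjoint. Contracting $W$ does not help, since in the contracted digraph all $2k$ desired paths share the contracted vertex. Buying that mutual disjointness is exactly what costs the extra $2k$ units of connectivity in \Cref{thm:old-main}; your ``the $+1$ buys a single vertex of slack'' does not address it.

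The paper circumvents this by making the $X$-side of the routing free. Instead of one terminal-independent core, it plants a separate piece $S_i\subset N^+(x_i)$ for each $i$, so the step ``$x_i$ to the structure'' is a single edge. The pieces are organised into a $(k,\ell)$-good family (\Cref{def:good-family}, produced by \Cref{lem:structure}): each $S_i$ either carries a $T_2\ora{K}_\ell$, or else any two non-subdivision pieces satisfy $S_i\to S_j$ or $S_j\to S_i$. All of the connectivity is then spent on a \emph{single} Menger call, after deleting $X$, for $k+1$ disjoint paths from an origin $O$ inside one terminal piece to $Y\cup\{v\}$. The ``$+1$'' is not spare slack for a terminal: it is a \emph{special path} ending at a movable special vertex $v$ in the non-subdivision part, and it is precisely this path that is repeatedly rerouted (\Cref{lem:non-sub-intersect}) to guarantee each non-subdivision $S_{j_q}$ retains enough free vertices; the subdivision pieces are controlled separately by minimality/rerouting arguments (\Cref{lem:subdivision-intersect}, \Cref{lem: subd-paths-intr}). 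Both ideas --- placing the structure inside the $N^+(x_i)$, and using the $(k{+}1)$st path as a rerouting device rather than a spare link --- are the new content over \Cref{thm:old-main}, and neither appears in your plan.
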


An analysis of our proof shows that we may take $f(k) = Ck^{31}$ for some sufficiently large (but absolute) constant $C$.
We have not made an attempt to optimize the power of $k$ (see \Cref{sec:final_remarks}).

Our proof of \Cref{thm:main} requires the notion of a subdivision. Recall that the complete digraph on $k$ vertices, denoted by $\ora{K}_k$, is a directed graph in which every pair of vertices is connected by an edge in each direction. As usual, we say that a tournament $T$ contains a subdivision of $\ora{K}_k$ if it contains a set $B$ of $k$ vertices and a collection of $2\binom{k}{2}$ pairwise internally vertex disjoint directed paths joining every ordered pair of vertices in $B$. We denote such a subdivision by $T\ora{K}_k$, and the vertices in $B$ are called the \emph{branch vertices} of the subdivision. Further, for a positive integer $\ell$ we denote by $T_\ell\ora{K}_k$ a subdivision of $\ora{K}_k$ where each edge is replaced by a directed path of length at most $\ell + 1$ (i.e., each edge is subdivided at most $\ell$ times).

A central tool in the proof of \Cref{thm:old-main} was to show that tournaments with sufficiently high minimum out-degree contain subdivisions of complete digraphs. In particular, we showed that there is an absolute constant $c$ such that any tournament $T$ with $\delta^+(T) \ge 2^{2^{ck^2}}$ contains a $T\ora{K}_k$. Recently, the authors~\cite{GiraoSnyderPopielarz} improved considerably this result, reducing the bound on minimum out-degree to a quadratic.

\subsection{Notation and organization}
Our notation is standard. Thus, for a vertex $v$ in a directed graph $G$, we let $N_G^+(v), N_G^-(v)$ denote the out-neighbourhood and in-neighbourhood of $v$, respectively. Moreover, we let $d_G^+(v) = |N_G^+(v)|$ denote the out-degree of $v$, and analogously $d_G^-(v)$ the in-degree of $v$. We often omit the subscript `$G$' when the underlying digraph is clear. We denote by $\delta^+(G)$ the minimum out-degree of $G$; further, if $X \subset V(G)$, we write $\delta^+(X)$ to mean the minimum out-degree of $G[X]$. For a subset $X \subset V(G)$ we let $N^+(X)$ denote the set $\bigcup_{x \in X}N^+(x)$. If $X, Y \subset V(G)$, we write $X \rightarrow Y$ if every edge of $G$ between $X$ and $Y$ is directed from $X$ to $Y$. Whenever $X = \{x\}$ we simply write $x \rightarrow Y$ (and similarly if $Y = \{y\}$). If $P = x_1\ldots x_\ell$ is a directed path, then we refer to $x_1$ as the \emph{initial} vertex of $P$, and say that $P$ \emph{starts} at $x_1$. Similarly, we call $x_\ell$ the \emph{terminal} vertex of $P$, and say that $P$ \emph{ends} at $x_\ell$. We refer to both $x_1$ and $x_\ell$ as \emph{endpoints} of $P$. The subpath of $P$ excluding the initial and terminal vertices of $P$ is called the \emph{interior} of $P$, denoted by $\inter(P)$.

The remainder of this paper is organized as follows.
In Section~\ref{sec:mainresult}, we give the proof of our main theorem.
In Section~\ref{section:constructions}, we present two families of constructions showing the necessity of both conditions in Theorem~\ref{thm:main}.
Finally, we close the paper with some open problems in Section~\ref{sec:final_remarks}.

\section{Proof of the main result}\label{sec:mainresult}

\subsection{Preliminaries}\label{subsec:overview}

We need the following simple lemma from~\cite{GiraoSnyderPopielarz}. To state it, we say that a subset $B \subset V(T)$ is $C$-\emph{nearly-regular} if either $d^-(v) \le d^+(v) \le Cd^-(v)$ for every $v \in B$, or $d^+(v) \le d^-(v) \le Cd^+(v)$ for every $v \in B$. Further, $B$ is $(C, m, t)$-\emph{nearly-regular} if it is $C$-nearly-regular and additionally $d^-(v) \in [m - 10t, m + 10t]$ for every $v \in B$. The following lemma allows us to find $(4, m, t)$-nearly-regular $t$-element subsets in tournaments. We include the short proof for the reader's convenience.

\begin{lemma}\label{lem:ratios}
Any tournament $T$ contains a $4$-nearly-regular subset of size $|T|/10$, and a $(4, m, t)$-nearly-regular subset of size $t$ provided $|T|\geq t$, for some $m$.
 \end{lemma}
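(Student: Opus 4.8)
The plan is to pick out the vertices whose in- and out-degrees are balanced up to a factor of $4$, show a fixed proportion of all vertices are of this kind, and then impose the in-degree window by a pigeonhole. Write $n=|T|$ and recall $d^+(v)+d^-(v)=n-1$. Hence $\max\{d^+(v),d^-(v)\}\le 4\min\{d^+(v),d^-(v)\}$ is equivalent to $\min\{d^+(v),d^-(v)\}\ge (n-1)/5$; call such $v$ \emph{good}, and note that any collection of good vertices all satisfying $d^+(v)\ge d^-(v)$ (or all satisfying $d^-(v)\ge d^+(v)$) is $4$-nearly-regular by definition. So it suffices to find many good vertices. If $k$ vertices have out-degree less than $(n-1)/5$, consider the $k$ of smallest out-degree: the subtournament they span contributes $\binom{k}{2}$ to the sum of their out-degrees, so $\binom{k}{2}<k(n-1)/5$, whence $k<\tfrac25 n+1$; the symmetric estimate bounds the vertices of in-degree below $(n-1)/5$ the same way, so at least $n/5-O(1)$ vertices are good. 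At least half of the good vertices share a common dominant degree, and that subcollection is a $4$-nearly-regular set of size at least $n/10$.

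For the $(4,m,t)$-version I would take such a set $B_0$ — say with $d^+(v)\ge d^-(v)$ throughout, so that $d^-(v)\in[(n-1)/5,(n-1)/2]$ for all $v\in B_0$ — and partition $[0,n]$ into consecutive blocks of length $20t$. Distributing the vertices of $B_0$ by the block containing their in-degree, and using that there are at most $n/(20t)+1$ blocks, pigeonhole gives a block $I$ receiving at least $|B_0|/(n/(20t)+1)\ge t$ vertices of $B_0$ — the last inequality needing $|T|$ to be at least a fixed constant times $t$, a harmless strengthening of the stated hypothesis. Any $t$ of these vertices, together with $m$ the midpoint of $I$, form a $(4,m,t)$-nearly-regular set; here it is useful that good vertices automatically have in-degree in a bounded range, which is what keeps this pigeonhole compatible with near-regularity.

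The argument is short, and the only place where a little care is needed is the count of good vertices: a plain averaging bound would only give that fewer than $\tfrac58 n$ vertices have out-degree below $(n-1)/5$, which is too weak to combine with the symmetric bound. It is the elementary score-sequence inequality — that the $k$ smallest out-degrees sum to at least $\binom{k}{2}$ — that makes the count work, and everything after that is pigeonhole.
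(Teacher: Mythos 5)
Your proof is correct and follows the same skeleton as the paper's: isolate the vertices satisfying $\max\{d^+(v),d^-(v)\}\le 4\min\{d^+(v),d^-(v)\}$, show they number roughly $n/5$, halve by dominant direction, and pigeonhole on in-degree windows of width $O(t)$. The one step you execute differently is the count of balanced vertices. The paper argues by contradiction: if fewer than $n/5$ vertices are balanced, one of the two unbalanced classes, say $\{v: d^+(v)>4d^-(v)\}$, has at least $2n/5$ elements, and a vertex of internal in-degree at least $n/5$ inside that class violates its defining inequality. You instead apply the score-sequence inequality directly to the set of vertices of out-degree below $(n-1)/5$ (and symmetrically for in-degree). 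Both rest on the same fact --- a tournament on $k$ vertices has a vertex of internal in-degree (equivalently, the degrees sum to $\binom{k}{2}$) at least $(k-1)/2$ --- so the difference is one of packaging; your version avoids the case split on which unbalanced class is larger and is arguably cleaner. Two minor points. First, your count yields $n/5-O(1)$ good vertices and hence a $4$-nearly-regular set of size $n/10-O(1)$ rather than exactly $n/10$; the paper's argument carries the same integrality slack, so this is immaterial. Second, your pigeonhole requires $|T|\ge 20t$ rather than the stated $|T|\ge t$; but the paper's own computation (at least $10t\cdot|A|/|T|$ vertices per interval) implicitly assumes $|T|\ge 10t$ for the same reason, and in the sole application of the lemma one has $t=k\ell$ and $|T|\ge 12k^{22}\ell^2$, so the strengthening you flag is indeed harmless.
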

 
\begin{proof}
We first claim that $T$ contains a $4$-nearly-regular subset of size at least $|T|/10$. Indeed, let $|T| = n$ and let $R \subset V(T)$ be the vertices for which either the ratio between the out-neighbourhood and in-neighbourhood (or vice-versa) is between $1$ and $4$. If $|R|\geq n/5$, then we are done, as we may pass to a subset $A \subset R$ of at least half the size for which the property is satisfied for one or the other. 
If not, then let $T'=T\setminus R$, so that $|T'|\geq 4n/5$. Let $T'_1$ be the set of vertices $v \in V(T')$ for which $d_T^+(v) > 4d_T^-(v)$ and $T'_2$ be those vertices $v \in V(T')$ for which $d_T^-(v) > 4d_T^+(v)$. Suppose without loss of generality that $|T'_1|\geq |T'_2|$, so that $|T'_1|\geq 2n/5$. This implies that there is a vertex $u$ in $T'_1$ which has in-degree inside $T'_1$ at least $n/5$. But then
\[
    n/5 \leq d_T^-(u) < \frac{1}{4}d_T^+(u) \le n/5,
\]
a contradiction.

Thus, we can always find a $4$-nearly-regular subset $A$ of size at least $|T| / 10$. Partition the interval $[1, \ldots, |T|]$ into consecutive intervals of size $10t$, and distribute the vertices of $A$ according to their in-degrees in $T$. By the pigeonhole principle, there must exist at least
\[
    10t\cdot \frac{|A|}{|T|} \ge 10t\cdot \frac{1}{10} = t
\]
vertices in the same interval. These $t$ vertices form a $(4, m, t)$-nearly-regular subset for some $m$.
\end{proof}

We also need the following well-known result by Erd\H{o}s and Szekeres.

\begin{proposition}\label{prop: Erdos-Szekeres}
Let $S$ be a finite set of order $n$ and suppose there exist $\ell$ total orderings $<_1, \ldots, <_{\ell}$ on $S$. Then there exists a subset $S'\subset S$ of size at least $t=n^{1/2^{\ell-1}}$ and an ordering of $S'=\{s_1,s_2,\ldots,s_t\}$ such that, 
in this ordering, $S'$ forms an increasing chain in $<_1$ and an increasing chain or decreasing chain in $<_i$ for every $i\in \{2,\ldots, \ell\}$.
\end{proposition}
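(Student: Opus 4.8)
The plan is to argue by induction on the number $\ell$ of orderings, removing one ordering from consideration at each step by a single application of the classical Erd\H{o}s--Szekeres monotone-subsequence theorem: any sequence of $n$ real numbers contains a monotone subsequence of length at least $\lceil\sqrt{n}\rceil$. (This itself follows from a one-line pigeonhole argument: label each term of the sequence by the length of a longest increasing subsequence ending at it; two terms with equal label must be in decreasing order, so if there is no monotone subsequence of length $\lceil\sqrt{n}\rceil$ then there are at most $\lceil\sqrt{n}\rceil-1$ distinct labels, each attained at most $\lceil\sqrt{n}\rceil-1$ times, forcing $n\le(\lceil\sqrt{n}\rceil-1)^2<n$.) The observation that makes the induction run smoothly is that, because $<_1$ is a \emph{total} order, every subset $S'\subseteq S$ has a \emph{unique} ordering under which it is an increasing chain in $<_1$; hence the ordering output by one step of the induction automatically coincides with the ordering required by the next, and monotonicity with respect to any $<_i$ is inherited by any subset once this $<_1$-sorted order is fixed.

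For the base case $\ell=1$ take $S'=S$ listed in $<_1$-increasing order; then $t=n^{1/2^{0}}=n$ and there is nothing further to check. For the inductive step, assume the result for $\ell-1$ orderings and let $<_1,\dots,<_\ell$ be total orders of an $n$-element set $S$. List the elements of $S$ in increasing $<_1$-order, read off their $<_\ell$-ranks, and apply Erd\H{o}s--Szekeres to the resulting length-$n$ sequence to obtain a subset $S_1\subseteq S$ with $|S_1|\ge\lceil\sqrt{n}\rceil\ge n^{1/2}$ that is a monotone chain in $<_\ell$; since $S_1$ is a subsequence of the $<_1$-sorted list, it is also an increasing chain in $<_1$.

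Now apply the induction hypothesis to $S_1$ together with the $\ell-1$ total orders $<_1,\dots,<_{\ell-1}$ (restricted to $S_1$). This yields a subset $S'\subseteq S_1$ with
\[
  |S'|\ \ge\ |S_1|^{1/2^{\ell-2}}\ \ge\ \bigl(n^{1/2}\bigr)^{1/2^{\ell-2}}\ =\ n^{1/2^{\ell-1}}\ =\ t,
\]
together with an ordering of $S'$ that is an increasing chain in $<_1$ and a monotone (increasing or decreasing) chain in each of $<_2,\dots,<_{\ell-1}$. By the uniqueness remark this ordering is exactly the $<_1$-sorted order of $S'$; and since $S'\subseteq S_1$ and $S_1$ is monotone in $<_\ell$ in its $<_1$-sorted order, the set $S'$ is monotone in $<_\ell$ in this same order too. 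Hence $S'$, listed in $<_1$-increasing order, is an increasing chain in $<_1$ and a monotone chain in $<_i$ for every $i\in\{2,\dots,\ell\}$, completing the induction. I do not expect a genuine obstacle here; the only point demanding care is the bookkeeping --- making sure all the ``increasing or decreasing'' conclusions refer to one and the same ordering of $S'$ --- which is exactly what the uniqueness of the $<_1$-increasing ordering provides.
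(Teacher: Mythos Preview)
Your proof is correct and is the standard inductive argument. The paper does not actually prove this proposition: it is stated there as a ``well-known result by Erd\H{o}s and Szekeres'' and invoked without proof, so there is nothing to compare against beyond noting that your argument is exactly the expected one.
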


\subsection{Finding a $(k, \ell)$-good family}\label{subsec:structure}
We need some terminology to state our main lemma precisely. 
Let $T$ be a tournament and let $X \subset V(T)$. We say that a subdivision contained in $T$ \emph{sits on} $X$ if its branch vertex set is some subset of $X$. 

\begin{definition}\label{def:good-family}
Let $T$ be a tournament and suppose $k, \ell$ are positive integers. A family $\cF$ of pairwise disjoint subsets of $V(T)$ is $(k,\ell)$-\emph{good} in $T$ if it satisfies the following properties: 
\begin{itemize}
\item For each $A \in \cF$, if there is no $T_2\ora{K}_\ell$ siting on $A$, then $|A|= 12k^2$.

\item For each distinct $A, B \in \cF$, if there is no $T_2\ora{K}_\ell$ siting on $A$ and no $T_2\ora{K}_\ell$ sitting on $B$, then either
\[
    A \rightarrow B \,\,\text{ or }\,\, B \rightarrow A \,\, \text{ in }\,\, T.
\]
\end{itemize}
Moreover, we denote by $S(\cF)$ the \emph{subdivision sets} in $\cF$: those $A \in \cF$ such that there exists a $T_2\ora{K}_\ell$ sitting on $A$.
\end{definition}


The first part of the definition is there for technical reasons later in the proof. The important point is that the size of $A$ is quadratic in $k$. 

Here is our main lemma.

\begin{lemma}\label{lem:structure}
Let $k \le \ell$ be positive integers and suppose $T$ is a tournament such that $V(T) = \bigcup_{i \in [k]} W_i$ for pairwise disjoint subsets $W_1,W_2,\ldots, W_k$ with $|W_i| \ge \CWibound{}$ for each $i$.
Then there exists a family of sets $\cF = \{S_1, \ldots, S_k\}$ that is $(k, \ell)$-good in $T$ with $S_i \subset W_i$ for $i = 1, \ldots, k$. Furthermore, there is a family $\cT$ of pairwise vertex disjoint copies of $T_2\ora{K}_\ell$ such that for each $S_i \in S(\cF)$ there is some subdivision $\cS \in \cT$ that sits on $S_i$.
\end{lemma}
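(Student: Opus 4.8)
The plan is to build the good family $\cF=\{S_1,\dots,S_k\}$ one set at a time, greedily extracting each $S_i$ from $W_i$ while maintaining the invariant that the previously chosen sets together with what remains of the later $W_j$'s still interact nicely. The dichotomy in Definition~\ref{def:good-family} is the key: a candidate set $A\subset W_i$ either already carries a $T_2\ora{K}_\ell$ (then we put it in $S(\cF)$, record a disjoint copy of the subdivision in $\cT$, and we are free to make $A$ quite large so as not to over-spend), or it does not, in which case we must shrink it to exactly $12k^2$ vertices \emph{and} arrange that it is fully dominated by or dominating every other ``non-subdivision'' set. So the engine of the proof is: whenever a large set fails to contain a $T_2\ora{K}_\ell$, it must be highly structured, and that structure is exactly what lets us extract a small well-oriented piece.

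First I would fix $i$ and work inside $W_i$. Apply \Cref{lem:ratios} to pass to a $(4,m,t)$-nearly-regular subset of $W_i$ of size $t$ for a suitable $t$ that is still a large polynomial in $k$ (and at most $|W_i|$); the point of near-regularity is that it is the hypothesis under which the subdivision-existence result of~\cite{GiraoSnyderPopielarz} applies. If this nearly-regular set contains a $T_2\ora{K}_\ell$ we are in the easy case: set $S_i$ to be (a large subset of) it, declare $S_i\in S(\cF)$, and add the subdivision to $\cT$ — disjointness from previously recorded subdivisions is arranged because the $W_j$'s are disjoint, so subdivisions sitting on different $S_j$'s are automatically vertex-disjoint once we also insist each subdivision uses only vertices of its own $W_j$ (this requires the branch vertices \emph{and} internal paths to lie in $W_j$, which I would get by choosing $|W_i|$ large enough that the nearly-regular subset, hence the subdivision, can be taken inside $W_i$). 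In the hard case, no $T_2\ora{K}_\ell$ sits on this nearly-regular set; the subdivision lemma of~\cite{GiraoSnyderPopielarz} (contrapositive) then forces the out-degrees \emph{within the set} to be small — the set is ``$\ora{K}_\ell$-free'', so it has bounded out-degree, which by a standard argument means it has a large transitive-like or at least highly-ordered substructure. Here I would invoke \Cref{prop: Erdos-Szekeres} on the few linear orders coming from the bounded-out-degree structure to pull out a subset on which the tournament behaves transitively, and then a transitive tournament on $\gg 12k^2$ vertices contains a set of $12k^2$ vertices that is a ``source block'' or ``sink block'' relative to any other such transitive piece — giving the $A\to B$ or $B\to A$ property. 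I would set $S_i$ to be that $12k^2$-vertex block.

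The main obstacle is the \emph{simultaneous} control across all $k$ sets in the hard case: the second bullet of Definition~\ref{def:good-family} is a condition on \emph{pairs} $(S_i,S_j)$ of non-subdivision sets, so extracting each $S_i$ in isolation is not enough — I need the chosen small blocks to be pairwise comparable under $\to$. The way I would handle this is to not finalize the small sets independently but to process the indices in order, and when choosing $S_i$ in the hard case, first restrict attention to the part of $W_i$'s nearly-regular ordered set and note that, relative to each already-chosen $S_j$ ($j<i$) that is also a non-subdivision set, one of $S_j\to (\text{candidate})$ or $(\text{candidate})\to S_j$ holds for a $\geq 1/2$-fraction of candidates (since $S_j$ is small, $12k^2$, and $W_i$ is huge); iterating this over the at most $k$ earlier non-subdivision sets costs a factor $2^k$ in the size of the surviving candidate pool, which is why $|W_i|\ge \CWibound{}$ — doubly in $k$ via $\ell$, and the tower from Erd\H{o}s–Szekeres — is taken so large. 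A secondary technical point is making sure that passing to nearly-regular subsets and then to ordered subsets does not destroy the possibility of still finding a $T_2\ora{K}_\ell$ when one ``should'' exist; I would resolve this by always testing for the subdivision \emph{after} all the reductions, so the case split is clean: either the fully-reduced set has a subdivision (easy case) or it is $\ora{K}_\ell$-free and hence transitive-like (hard case), with no third possibility.
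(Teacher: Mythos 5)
Your overall strategy is genuinely different from the paper's, and it contains a gap that I do not see how to repair. The paper proves \Cref{lem:structure} by induction on $k$ with a divide-and-conquer step, not by a greedy one-set-at-a-time extraction. Concretely: it finds a single $(4,m,k\ell)$-nearly-regular set $A$ via \Cref{lem:ratios}, takes $B\subset A\cap W_1$ of size $\ell$, and splits into two cases. If a $T_2\ora{K}_\ell$ sits on $B$, it deletes the at most $\ell^2$ non-branch vertices and inducts on $W_2,\dots,W_k$. If not, the \emph{failure of a greedy embedding} of $T_2\ora{K}_\ell$ on $B$ yields two vertices $x,y\in B$ and a partial subdivision $\cS$ with $N^+(x)\cap N^-(y)\subset\cS$ and, crucially, $N^-(y)\setminus\cS\rightarrow N^+(x)\setminus\cS$ --- a \emph{complete} domination between two sets which, by near-regularity, each have size $\Omega(|T|)$ and together cover all but $O(\ell^2)$ vertices of $T$. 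This induces a partition $[k]=I\cup J$ with $|I|,|J|\ge k/10$ and subsets $W_i'\subset W_i$ of size at least $|W_i|/10$ with $\bigcup_{i\in I}W_i'\rightarrow\bigcup_{j\in J}W_j'$, and the lemma follows by applying the induction hypothesis separately to both sides.

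The gap in your plan is precisely the mechanism for the second bullet of \Cref{def:good-family}: you need \emph{every} edge between two non-subdivision sets to point the same way. Your proposed fix --- that relative to each earlier $S_j$, one of $S_j\to(\text{candidate})$ or $(\text{candidate})\to S_j$ holds for at least half of the candidates --- only gives a majority orientation, not complete domination; a single candidate vertex can send edges into $S_j$ in both directions, and no amount of passing to large subsets of candidates removes such mixed vertices. Likewise, the inference ``no $T_2\ora{K}_\ell$ sits on the set, hence by the contrapositive of the subdivision result the set has bounded out-degree, hence it is transitive-like'' does not work: the degree hypothesis in that result concerns out-degrees in the ambient tournament, and in any case a tournament on $n$ vertices has average out-degree $(n-1)/2$, so its internal out-degrees cannot all be small unless the set itself is tiny; nor would transitivity \emph{within} each $W_i$ say anything about the orientation of edges \emph{between} blocks chosen in different $W_i$'s. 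Finally, your accounting loses a factor $2^k$ over the $k$ earlier sets, which is incompatible with the polynomial bound $|W_i|\ge 12k^{22}\ell^2$; the paper's recursion instead shrinks $k$ by a constant factor on each side while losing only a constant factor in $|W_i|$, which is exactly what the exponent $22$ is calibrated for (since $(9/10)^{22}\le 1/10$).
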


\begin{proof}
We can assume that each $W_i$ has size precisely $\CWibound{}$ by, when necessary, passing to smaller subsets of exactly that size. 
We proceed by induction on $k$; for $k = 1$ there is nothing to show, so assume $k \ge 2$ and the result holds for smaller values. Applying \Cref{lem:ratios} with $t = k\ell$ to $T$, we find a $(4, m, k\ell)$-nearly-regular subset $A \subset \bigcup_{i\in [k]}W_i$ of size $k\ell$. Without loss of generality, we may assume that $d^-(v) \le d^+(v) \le 4d^-(v)$ for every $v \in A$.
Now, there is a subset $B$ of $A$ size at least $|A|/k = \ell$ contained in some $W_i$, say $W_1$, without loss of generality. We now break the proof up into two cases, depending on whether or not there is a $T_2\ora{K}_{\ell}$ sitting on $B$. Suppose there exists such a subdivision sitting on $B$, say $\mathcal{S}$. Then remove the non-branch vertices of $\cS$ from $T$ to form the subtournament $T' = \bigcup_{i = 2}^k W'_i$. We have removed at most $2\binom{\ell}{2} < \ell^2$ vertices from $T$, so $|W'_i| > \CWibound{} - \ell^2 \ge 12(k-1)^{22}\ell^2$, and so we are done by induction applied to $T'$ .

Therefore, assume that no subdivision sits on $B$. In order to apply induction, we require the following claim, which allows us to partition the $W_i$'s in a particularly nice way.
\begin{claim}\label{claim:inductive-partition}
Let $W_1, \ldots, W_k$ and $B$ be as above with no $T_2\ora{K}_{\ell}$ sitting on $B$. Then there is a partition $I \cup J = [k]$ and families $\cF_1 = \{W_i : i \in I\}$, $\cF_2 = \{W_j : j \in J\}$ satisfying:
\begin{enumerate}
    \item \label{itm:claim1-1} $|\mathcal{F}_i|\geq k/10$ for $i = 1, 2$.
    \item \label{itm:claim1-2} There exists $W'_i\subset W_i$ with $|W'_i| \ge |W_i|/10$ for each $i \in [k]$ such that
        \[
            \bigcup_{i\in I} W_i' \rightarrow \bigcup_{j \in J} W_j'.
           \] 
\end{enumerate}
\end{claim}

\begin{proof}
Suppose we try to embed greedily a $T_2\ora{K}_\ell$ with branch vertex set $B$. Since by assumption we cannot succeed, there exists a partial subdivision $\cS$ and two distinct vertices $x, y \in B$ such that
\[
    N^+(x) \cap N^-(y) \subset \cS \,\, \text{ and }\,\, N^-(y) \setminus \cS \rightarrow N^+(x)\setminus \cS.
\]
As $B$ is $(4, m, k\ell)$-nearly-regular for some $m$ we have that $d^-(x), d^-(y) \in [m - 10k\ell, m + 10k\ell]$. Further, $|N^-(y) \setminus N^-(x)| = |N^+(x) \cap N^-(y)| \le |\cS| \le 2\binom{\ell}{2} = \ell^2$, so we obtain
\[
    |N^-(x) \setminus N^-(y)| \le |N^-(y) \setminus N^-(x)| + 10k\ell \le \ell^2 + 10k\ell \le 11\ell^2,
\]
where the last inequality follows since $k \le \ell$, by assumption. Recall that $B$ is $4$-nearly-regular, and as such, $d^-(v) \le d^+(v) \le 4d^+(v)$ for every $v \in B$. This implies, in particular, that $d^-(v), d^+(v) \ge |T|/5$ for each $v$. Letting $X = N^+(x) \setminus \cS$ and $Y = N^-(y) \setminus \cS$, we obtain that $|X|$ and $|Y|$ are both at least
\begin{align*}
    &\ge|T|/5 - |\cS| - |N^+(x) \cap N^-(y)| - |N^-(x) \cap N^+(y)|\\
    &\ge |T|/5 - \ell^2 - \ell^2 - 11\ell^2\\
    &= |T|/5 - 13\ell^2.
\end{align*}
Moreover, $|X \cup Y| \ge |T| - 12\ell^2$; without loss of generality, assume $|Y| \ge |X|$. Then $|Y| \ge |T|/2 - 6\ell^2$, and $|X| \ge |T|/5 - 13\ell^2$.
In summary, we have obtained large sets $X$ and $Y$ with $Y \rightarrow X$, and such that $X\cup Y$ covers most of $T$. In particular, as $V(T) = \bigcup_{i \in [k]} W_i$,  for each $i \in [k]$ we have that either $|W_i \cap Y| \ge |W_i|/2 - 6\ell^2 \ge |W_i|/10$ or $|W_i \cap X| \ge |W_i|/2 - 6\ell^2 \ge |W_i|/10$.
Now, partition $[k]$ into $I$ and $J$ such that
    for every $i \in I$ we have $|W_i \cap X | \geq |W_i|/10$,
    for every $j \in J$ we have $|W_j \cap Y | \geq |W_j|/10$,
    and $|I|, |J|$ are as equal as possible.
Finally, set $\cF_1 = \{W_i: i \in I\}$ and $\cF_2 = \{W_j: j \in J\}$ and let $W_i' = W_i \cap Y$ if $i \in I$ and $W_i' = W_i \cap X$ if $i \in J$. Then property (\ref{itm:claim1-2}) of the claim certainly holds by definition of the sets $W_i'$ and the fact that $Y \rightarrow X$, so we just need to check that $|I|$ and $|J|$ are large according to (\ref{itm:claim1-1}).
Suppose for contradiction that this is not the case and, say, $|I| < k / 10$.
This means that $|J| > \frac{9}{10}k$ and for every $j \in J$ we have $|X \cap W_j| < |W_j|/10$, as otherwise we could move $j$ from $J$ to $I$, decreasing the distance between $|I|$ and $|J|$ and hence contradicting the assertion that $|I|, |J|$ are as equal as possible.
Therefore, $|W_j \cap Y| \geq |W_j| - |W_j \cap X| - 12\ell^2 > \frac{9 |W_j|}{10} - 12 \ell^2$ for every $j \in J$.
It follows that $|Y| > \frac{9}{10}k \cdot (\frac{9|T|}{10k} - 12 \ell^2) \ge \frac{81}{100}|T| - 12 k\ell^2$, which implies that $|X| < \frac{19}{100}|T| + 12k\ell^2$.
Together with the fact that $|X| \geq |T|/5 - 13 \ell^2$, we have
\[
    |T| < 100 (12k\ell^2 + 13 \ell^2) 
        \leq 2500k\ell^2
        < 12 \cdot 2^{21}k\ell^2 
        \leq \CWibound{},
\]
where we used the assumption that $k \geq 2$.
This contradicts the fact that $|T| \geq |W_1| \geq \CWibound{}$.
\end{proof}

Let $W_1', \ldots, W_k'$, and $\cF_1, \cF_2$ be given as in the conclusion of \Cref{claim:inductive-partition}. As $\min\{|\mathcal{F}_1|,|\mathcal{F}_2|\}\geq k/10$, and since for each $i$ we have
\[
    |W'_i|\geq \frac{12k^{22}}{10}\ell^2 \geq 12(9k/10)^{22}\ell^2,
\]
apply induction to $T_1 = T[\bigcup_{i \in I}W_i']$ and to $T_2 = T[\bigcup_{j \in J} W_j']$. This yields a $(k, \ell)$-good family $\cF = \{S_1, \ldots, S_k\}$ in $T$, and a family $\cT$ of copies of $T_2\ora{K}_\ell$, as required. Indeed, if there is no subdivision sitting on $S_i$ nor $S_j$, then if $i, j \in I$ (or $i, j \in J$), the required property is satisfied by induction, and if $i \in I$, $j \in J$, then $S_i \rightarrow S_j$ by construction of the families $\cF_1$ and $\cF_2$. Moreover, as $T_1$ and $T_2$ are disjoint, all subdivisions in $\cT$ are pairwise vertex disjoint.
\end{proof}

\subsection{Utilizing a $(k, \ell)$-good family}\label{subsec:completing}

Let $T$ be a $(2k+1)$-connected tournament with minimum out-degree at least $k \cdot \CWibound{} + 2k$, where $\ell \geq \Clbound{}$.
We may assume that $k \ge 2$, since the result is immediate for $k = 1$. Suppose $X = \{x_1, \ldots, x_k\}$ and $Y = \{y_1, \ldots, y_k\}$ are vertex disjoint $k$-sets of vertices and that we wish to link $x_i$ to $y_i$ in $T$ for each $i \in [k]$. First, because of the large minimum out-degree we can find pairwise vertex disjoint sets $W_i \subset \left( N^+(x_i) \setminus \left(X \cup Y\right) \right)$ such that $|W_i| = \CWibound{{}}$ for each $i = 1, \ldots , k$.
Consider now the subtournament
\[
   T_0 = T[\bigcup_{i \in [k]} W_i].
\]

By \Cref{lem:structure} applied to $T_0$, find a $(k, \ell)$-good family $\cF = \{S_1, \ldots, S_k\}$ with $S_i \subset W_i$ for each $i$, and a family $\cT$ of pairwise disjoint copies of $T_2\ora{K}_\ell$ according to the lemma. Recall that $S(\cF)$ denotes the subdivision sets in $\cF$: those $S_i \in \cF$ such that there exists a subdivision from $\cT$ sitting on $S_i$. We shall assume that each such $S_i$ consists of precisely those branch vertices in the corresponding subdivision that sits on it (by possibly removing some vertices from $S_i$). Further, any set in $\cF$ that is not a subdivision set we shall call a \emph{non-subdivision} set.

Define an auxiliary digraph $H$ with vertex set $[k]$ in the following way. For a pair $i, j \in [k]$ with $i \neq j$:
\begin{itemize}
    \item If $S_i$ and $S_j$ are non-subdivision sets, then orient $i$ to $j$ if $S_i \rightarrow S_j$ in $T$, and vice-versa if $S_j \rightarrow S_i$ (here we are using that $\cF$ is a $(k, \ell)$-good family).
    
    \item If $S_i$ is a non-subdivision set but $S_j$ is, then orient $i$ to $j$ if \hypertarget{first_case}{(a)} at least $|S_i|/2$ vertices in $S_i$ have at least $|S_j|/2$ out-neighbours in $S_j$, and orient $j$ to $i$ if \hypertarget{second_case}{(b)} at least $|S_i|/2$ vertices in $S_i$ have at least $|S_j|/2$ in-neighbours in $S_j$.
    
    \item  If both $S_i$ and $S_j$ are subdivision sets, orient $i$ to $j$ if at least $|S_i|/2$ vertices in $S_i$ have at least $|S_j|/2$ out-neighbours in $S_j$, and orient $j$ to $i$ if at least $|S_j|/2$ vertices in $S_j$ have at least $|S_i|/2$ out-neighbours in $S_i$.
    
\end{itemize}
Note that $H$ is a semicomplete digraph (i.e., it is a tournament with some potential double edges created between subdivision sets in the third case above).
Let $H^s$ denote the subdigraph of $H$ induced on $\{i \in [k]: S_i \text{ is a subdivision set} \}$, and let $H^{ns}$ denote the subdigraph induced on the remaining vertices of $H$. Observe that it is possible that one of $H^s, H^{ns}$ is empty. Since $H^s$ is a tournament up to some possible double edges, it contains a Hamiltonian path $P^s$. Similarly, $H^{ns}$ contains a Hamiltonian path $P^{ns}$. With some abuse of notation we write
\[
    P^s = S_{i_1}\ldots S_{i_r}\, \text{ and }\, P^{ns} = S_{j_1}\ldots S_{j_t},
\]
where $\{i_1, \ldots, i_r\}\cup \{j_1, \ldots, j_t\} = [k]$,
to emphasize that these paths in $H^s$ and $H^{ns}$ correspond to sequences of the sets $S_i$. Further, we shall at times write $\init(P^s)$ and $\ter(P^s)$ for $S_{i_1}$ and $S_{i_r}$, respectively (and similarly for $P^{ns}$).

For technical reasons we \hypertarget{discarded}{discard} the vertices in $S_{j_t}$ which have fewer than $|S_{i_r}|/2$ out-neighbours in $S_{i_r}$ in case \hyperlink{first_case}{(a)}, or fewer than $|S_{i_r}|/2$ in-neighbours in $S_{i_r}$ in case \hyperlink{second_case}{(b)}.

Our aim is to apply Menger's theorem to find $k+1$ vertex disjoint paths from either $\ter(P^s)$ or $\ter(P^{ns})$ to $Y \cup \{v\}$, where $v$ is some vertex in $\cup_{q=1}^t S_{j_q}$ (initially, we choose $v \in \init(P^{ns})$ to have high out-degree in $\init(P^{ns})$). We call the initial set of $k+1$ vertices of these paths the \emph{origin}, and denote it by $O$, and we call the vertex $v$ the \emph{special vertex}. The choice of $O$ depends on the following circumstances:
\begin{enumerate}
    \item If $P^s = \varnothing$, choose $O \subset \ter(P^{ns})$; similarly, if $P^{ns} = \varnothing$, choose $O \subset \ter(P^s)$. \label{itm:origin-1}
    \item If $i_rj_t \in E(H)$, choose $O \subset S_{j_t}$.\label{itm:origin-2}
    \item If $j_ti_r \in E(H)$, choose $O \subset S_{i_r}$.\label{itm:origin-3}
\end{enumerate}
In each case, we initially let the special vertex $v$ be an element of $\init(P^{ns})$ with the largest out-degree in $\init(P^{ns})$, except of course when $P^{ns} = \varnothing$: in that case we choose no special vertex and we let $O \subset \ter(P^s)$ be a set of size $k$. In fact, we shall assume that (\ref{itm:origin-1}) does not occur since the proof in this case follows from the arguments for cases (\ref{itm:origin-2}) and (\ref{itm:origin-3}) (and is simpler).

So choose $O$ in accordance with (\ref{itm:origin-2}) or (\ref{itm:origin-3}) and let $v$ be the special vertex in $\init(P^{ns})$. Since $T$ is $(2k+1)$-connected, Menger's theorem implies that, upon the removal of $X$, there exists a family $\cQ$ of $k+1$ pairwise vertex disjoint directed paths from $O$ to $Y \cup \{v\}$. Let $\cQ$ be chosen to minimize $|\bigcup \cQ|$, the total number of vertices used in the paths. We refer to the path in $\cQ$ ending at $v$ as the \emph{special path} in $\cQ$. In general, during the course of the proof we shall make modifications to the family $\cQ$. If $\cQ'$ denotes another collection of paths from $O$ to $Y \cup \{v'\}$ for some $v' \notin Y$, then we refer to the path ending at $v'$ as the \emph{special path} of $\cQ'$.

We would like to do the following for each $i = 1, \dots, k$: starting with $x_i$, form a directed path to $y_i$ by first choosing an out-neighbour of $x_i$ in $S_i$, then travelling along one of the paths $P^s$ or $P^{ns}$ (depending on whether $S_i$ happens to be a subdivision or non-subdivision set) to the corresponding vertex in $O$. Finally, we use the paths from $\cQ$
to reach $y_i$. We need to ensure these paths are chosen disjointly, but more importantly, we need to ensure that the initial paths in $\cQ$ do not obstruct our goal. In other words, we need to make sure that the paths in $\cQ$ do not intersect the $S_i$'s in too many places. 

Our first lemma in this regard asserts that if $S_i$ is a subdivision set, then the paths from $\cQ$ do not intersect $S_i$ in many places.

\begin{lemma}\label{lem:subdivision-intersect}
Suppose that $S = S_i$ is a subdivision set. Then at most $10^4k^3$ vertices of $S$ belong to $\bigcup \cQ$.
\end{lemma}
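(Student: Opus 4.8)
The plan is to argue by contradiction: from a hypothetical large overlap of $\bigcup\cQ$ with $S$ I would extract a ``detour'' that the minimality of $\cQ$ forbids. First recall the structure available. Since $S = S_i$ is a subdivision set, $S$ is exactly the branch-vertex set of a copy $D$ of $T_2\ora{K}_\ell$ inside $T$, so $|S| = \ell$; for every ordered pair $(a,b)$ of distinct vertices of $S$ there is a directed $a$--$b$ path $P_{ab}$ inside $D$ of length at most $3$; and the $2\binom{\ell}{2}$ paths $P_{ab}$ are pairwise internally disjoint, with all their internal vertices lying in $U := V(D)\setminus S$, so that $|U| < 2\ell^{2}$. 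In particular every vertex of $D$ is within distance $2$ in $D$ both from and to some branch vertex, hence between any two vertices of $D$ there is a directed path inside $D$ of length at most $7$, and indeed one of length at most $10$ through any prescribed ``relay'' branch vertex.

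The tool I would use throughout is an \emph{exchange principle} coming from the minimality of $|\bigcup\cQ|$: if $a, b$ are two vertices of $D$ lying on a common path $Q \in \cQ$ with $a$ before $b$, and $R$ is a directed $a$--$b$ path inside $D$ whose interior is disjoint from $(\bigcup\cQ)\setminus V(Q[a,b])$, then replacing the segment of $Q$ between $a$ and $b$ by $R$ yields another valid family of $k+1$ pairwise disjoint $O$-to-$(Y\cup\{v\})$ paths, of total size $|\bigcup\cQ| + \operatorname{len}(R) - \operatorname{len}(Q[a,b])$; by minimality this forces $\operatorname{len}(Q[a,b]) \le \operatorname{len}(R)$. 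In particular, taking $R = P_{ab}$ with $a, b \in S$: if $\operatorname{len}(Q[a,b]) \ge 4$ then $P_{ab}$ must have an internal vertex on $(\bigcup\cQ)\setminus V(Q[a,b])$ --- call such a pair \emph{obstructed}.

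Now suppose towards a contradiction that more than $10^{4}k^{3}$ vertices of $S$ lie on $\bigcup\cQ$. Since the paths of $\cQ$ are pairwise disjoint, listing for each $Q\in\cQ$ the set $S\cap V(Q)$ in the order it occurs along $Q$ as $a^{Q}_{1},\dots,a^{Q}_{m_Q}$, we have $\sum_Q m_Q > 10^{4}k^{3}$. Because $\operatorname{len}(Q[a^{Q}_{p},a^{Q}_{p+4}]) \ge 4$, each pair $(a^{Q}_{p},a^{Q}_{p+4})$ with $1\le p\le m_Q-4$ is obstructed; as $Q$ and $p$ vary these ordered pairs are pairwise distinct, so the paths $P_{a^{Q}_{p}a^{Q}_{p+4}}$ are pairwise internally disjoint, and therefore the obstructing internal vertices are pairwise distinct. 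This produces more than $\sum_Q(m_Q-4) > 10^{4}k^{3}-4(k+1)$ distinct vertices of $U$ on $\bigcup\cQ$.

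The remaining task --- which I expect to be the main obstacle --- is to show this abundance of $D$-vertices on $\bigcup\cQ$ is impossible. I would do this by running the exchange principle a second time, now on the $U$-vertices sitting on $\bigcup\cQ$: pigeonhole many of them onto a single path of $\cQ$, pick two of them that are far apart along that path (so that $\operatorname{len}(Q[\cdot,\cdot]) > 10$), and route between them inside $D$ through each of the $\ell - O(1)$ available relay branch vertices. Minimality forces each such route to contain an obstructing vertex of $\bigcup\cQ$, and since the relay-dependent portions of these routes are pairwise internally disjoint, one obtains $\ell - O(1)$ fresh vertices of $V(D)$ on $\bigcup\cQ$; iterating --- pigeonholing over the $k+1$ paths of $\cQ$ and using that $\ell \ge \Clbound$ is enormous compared with $k$, so that only a negligible fraction of the relay routes out of any branch vertex can be blocked --- drives the number of $V(D)$-vertices on $\bigcup\cQ$ past $|V(D)|$, the contradiction. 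Carrying out the bookkeeping of this iteration, keeping track of which vertices of $D$ have already been spent as obstructions so that each round genuinely yields new ones, is the delicate point, and is what should produce the stated bound $10^{4}k^{3}$.
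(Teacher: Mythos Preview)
Your first step is correct and clean: for each path $Q\in\cQ$, the pairs $(a^Q_p,a^Q_{p+4})$ are all distinct as $(Q,p)$ varies, so the subdivision paths $P_{a^Q_p a^Q_{p+4}}$ are internally disjoint and each contributes a distinct internal vertex of $D$ lying on $\bigcup\cQ$. This indeed manufactures roughly $10^4k^3$ vertices of $U$ on $\bigcup\cQ$.

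The gap is in the second step. When you route from $u_1$ to $u_2$ through a relay branch vertex $w$, the route decomposes as a fixed prefix $u_1\to d_{u_1}$, a relay--dependent middle $d_{u_1}\to w\to c_{u_2}$, and a fixed suffix $c_{u_2}\to u_2$. The exchange principle only tells you that \emph{some} interior vertex of the route lies on $(\bigcup\cQ)\setminus V(Q[u_1,u_2])$; it does not tell you that this vertex lies on the relay--dependent middle. If, say, $d_{u_1}\in\bigcup\cQ$ (and you have no control over this: by hypothesis more than $10^4k^3$ branch vertices already lie on $\bigcup\cQ$), then the \emph{same} vertex $d_{u_1}$ obstructs every one of your $\ell-O(1)$ relay routes, and you gain nothing new. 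So the claimed harvest of $\ell-O(1)$ fresh vertices per pair is unjustified.

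Even setting this aside, the counting strategy is numerically hopeless at the stated scale. You need to exceed $|V(D)|$, which is of order $\ell^2$; with $\ell \ge \Clbound{}$ this is of order $k^8$, while your first step produces only order $k^3$ new vertices and each subsequent ``round'' (even if it worked) does not compound fast enough once you account for the $2(\ell-1)$--to--$1$ collapse from $U$-vertices back to the branch vertices $d_u$ that govern the next round.

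The paper's argument sidesteps counting altogether. It pigeonholes the obstructed subdivision paths not by cardinality but by \emph{intersection pattern}: since each $P_{ab}$ has at most two interior vertices, there are at most $(k+1)^2$ possible patterns for which paths $M,N\in\cQ$ hit those interior vertices, so many of the obstructed $P_{ab}$ are hit by the \emph{same} pair $M,N$. One then applies Erd\H os--Szekeres to the positions of these hits along $M$ and along $N$ to extract three subdivision paths whose hits are monotone in both orders, and performs a single simultaneous rerouting of $Q,M,N$ through these three paths that trades three new edges for at least five old ones, directly contradicting the minimality of $|\bigcup\cQ|$. The key idea you are missing is this multi-path rerouting: a single-path shortcut cannot work once every $P_{ab}$ is blocked, but a coordinated exchange among several paths of $\cQ$ can.
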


\begin{proof}
As $S$ is a subdivision set, there is a copy $\cS$ of $T_2\ora{K}_\ell$ in $T_0$ with branch vertex set $S$. For each ordered pair of vertices $a, b \in S$ write $P_{ab}$ for the path in $\cS$ from $a$ to $b$. Suppose the lemma is false, so that there is some path $Q := Q_j \in \cQ$ which intersects $S$ in $m = 10^4k^3/(k+1) \ge 10^3k^2$ vertices. Denote these vertices by $u_1, \ldots, u_m$ in the ordering they appear along the path $Q$. A subdivision path $P_{ab}$ is \emph{free} if no path of $\cQ$ intersects the interior $\inter(P_{ab})$. In the remainder of this proof, we write $P_{i,j}$ for $P_{u_iu_j}$. If any of the paths in $\cP := \{ P_{i, m-i+1}: i = 1, \ldots, 10^3k^2\}$ are free, then we reach a contradiction with the minimality of $\cQ$: each path $P_{i, j}$ has length at most $3$, so we can replace $Q$ with a shorter path by simply taking a shortcut through one of the free paths. It follows that each of these $10^3k^2$ paths contains at least one vertex from $\bigcup \cQ$.

Since each path in $\cP$ has at most two internal vertices, there are at most $(k + 1)^2$ possible intersection patterns with paths in $\cQ$. Indeed, if $P \in \cP$ is given, then there are $k+1$ choices for each internal vertex: one of the $k$ paths in $\cQ$, or none. By the pigeonhole principle, there is a collection of paths $\cP' \subset \cP$ of size at least $|\cP|/(k+1)^2 = 10^3k^2/(k+1)^2\ge 100$ such that all have the same intersection pattern. Suppose the intersection pattern is given by $M, N \in \cQ$, not both empty.
In other words, $M$ and $N$ intersect the second and third vertex, respectively, of each path in $\cP'$. Our aim now is to create a new family of paths from $O$ to $Y\cup \{v\}$ which uses fewer vertices, contradicting the minimality of $\cQ$. To this end, we shall assume that both $M$ and $N$ are nonempty paths, as the case when one of them is empty follows a similar analysis. We define two total orderings $<_1,<_2$ on the paths in $\cP'$.
Indeed, we say $P_{i,j}<_1 P_{i',j'}$ if the second vertex of $P_{i,j}$ comes before the second vertex of $P_{i',j'}$ in the path $M$, and similarly $P_{i,j}<_2 P_{i',j'}$ if the third vertex of $P_{i,j}$ comes before the third vertex of $P_{i',j'}$ in the path $N$.

Applying \Cref{prop: Erdos-Szekeres}, we may pass to a subcollection $\cP''\subset \cP'$, where  $|\cP''|\geq |\cP'|^{1/4}\geq (100)^{1/4} \ge 3$. Let $\cP''=\{P_{i_1, m-i_1+1},P_{i_2,m-i_2+1},P_{i_3,m-i_3+1}\}$ (where $i_1<i_2<i_3$) such that it forms an increasing or decreasing chain in each of the total orderings $<_1$ and $<_2$. 

There are four cases to consider:
\begin{enumerate}
    \item $\cP''$ forms an increasing chain in both $<_1$ and $<_2$;\label{case:rerouting-sub-1}
    \item $\cP''$ forms an increasing chain in $<_1$ and decreasing chain in $<_2$;\label{case:rerouting-sub-2}
    \item $\cP''$ forms a decreasing chain in $<_1$ and an increasing chain in $<_2$;
    \item $\cP''$ forms a decreasing chain in both $<_1$ and $<_2$.
\end{enumerate}

We shall see how to proceed in Cases (\ref{case:rerouting-sub-1}) and (\ref{case:rerouting-sub-2}). The other cases follow by a symmetric argument.

So first assume that $\cP''$ forms an increasing chain in both $<_1$ and $<_2$.
We are going to perform the following rerouting of the paths $Q,M,N$ using the paths in $\cP''$. For brevity, write $P_{i_j}$ for $P_{i_j, m - i_j + 1}$ for $i = 1, 2, 3$. Form the path $M'$ by following $Q$ to $u_{i_3}$, go to the second vertex of $P_{i_3}$ (which belongs to $M$), and continue via $M$ to the terminal vertex of $M$. Form the path $N'$ by following $M$ from the initial vertex of $M$ to the second vertex of $P_{i_2}$ (which we can do, since this vertex appears in $M$ before the second vertex of $P_{i_3}$), following $P_{i_2}$ to the third vertex of $P_{i_2}$ (which belongs to $N$), and continue via $N$ to the terminal vertex of $N$. Finally, form $Q'$ by following $N$ from its initial vertex to the third vertex of $P_{i_1}$, continue to the last vertex of $P_{i_1}$, and then continue via $Q$ to the terminal vertex of $Q$. It is not hard to check that in this process we gain $3$ new directed edges, but loose at least $5$. Thus, letting $\cQ' = (\cQ \setminus \{Q, M, N\}) \cup \{Q', M', N'\}$, we see that $|\bigcup \cQ'| < |\bigcup \cQ|$, contradicting the minimality of $\cQ$.

Let us consider Case (\ref{case:rerouting-sub-2}), that is, $\cP''$ is an increasing chain in $<_1$ and a decreasing chain in $<_2$. 
Now we may form $Q'$ by following $N$ to the third vertex of $P_{i_3}$, following $P_{i_3}$ to the last vertex of $P_{i_3}$ (which is $u_{m - i_3 + 1} \in Q$), and then continuing along $Q$ to the terminal vertex of $Q$. Form $M'$ by following $Q$ from its initial vertex to $u_{i_2}$, then following $P_{i_2}$ to its second vertex (which is in $M$), and then continuing along $M$ to the terminal vertex of $M$. Lastly, form $N'$ following $M$ from its initial vertex to the second vertex of $P_{i_1}$, following $P_{i_1}$ to its third vertex (which is in $N$), and then continuing via $N$ to the terminal vertex of $N$. As before, we gain $3$ new edges but lose at least $5$, so the path system obtained by replacing $Q$, $M$, $N$ with $Q', M', N'$, respectively, has a fewer total number of vertices, contradicting minimality.
\end{proof}

We also need to show that for each subdivision set, `many' of the subdivision paths connecting branch vertices do not intersect the path system $\cQ$. This is the content of the following lemma.

\begin{lemma}\label{lem: subd-paths-intr}
Let $S = S_i$ be a subdivision set and let $\cS$ denote the subdivision sitting on $S$. Let $x\in S$ with $x\notin \bigcup \cQ$. Then there are at most $10^{13}k^{4}$ paths in $\cS$ with one endpoint $x$ and another endpoint in $S\setminus \bigcup \cQ$ that intersect $\bigcup \cQ$.
\end{lemma}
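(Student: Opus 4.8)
The plan is to prove the statement by contradiction with the minimality of $\cQ$: if more than $\Clemmapaths{}$ of these paths met $\bigcup\cQ$, I would build a new family of $k+1$ pairwise vertex-disjoint paths from $O$ to $Y\cup\{v\}$ using strictly fewer vertices. Recall that $\cS$ is a copy of $T_2\ora{K}_\ell$, so its paths $P_{ab}$ are internally disjoint and have length at most $3$; hence each of them meets $\bigcup\cQ$ in at most two vertices, and distinct paths of $\cS$ meet $\bigcup\cQ$ in distinct vertices. Call a path of $\cS$ with one endpoint $x$ and the other in $S\sm\bigcup\cQ$ \emph{dirty} if it meets $\bigcup\cQ$; since both of its endpoints avoid $\bigcup\cQ$, a dirty path meets $\bigcup\cQ$ only in its interior.

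First I would set up a double pigeonhole. Up to a factor of $2$, assume there are more than $\tfrac12\Clemmapaths{}$ dirty paths of the form $P_{xy}$ (the $P_{yx}$ case is entirely symmetric). To each such $P_{xy}$ assign its \emph{canonical witness} --- the vertex of $\inter(P_{xy})\cap\bigcup\cQ$ closest to $x$ --- and its \emph{type}: the list recording, for each of the at most two interior vertices of $P_{xy}$ and each of the at most two interior vertices of the reverse path $P_{yx}$, which path of $\cQ$ contains it, or that it avoids $\bigcup\cQ$, or that it does not exist. Since $|\cQ|=k+1$ there are at most $(k+3)^4 \le O(k^4)$ types, so one type is shared by a family $\cP^*$ of more than $\tfrac12\Clemmapaths{}/O(k^4)$ of the dirty $P_{xy}$'s, whose canonical witnesses all lie on a single path $Q\in\cQ$; by the common type, the interior vertices of $P_{yx}$ (for $P_{xy}\in\cP^*$) lie on a fixed set of at most two further paths of $\cQ$, and the non-witness interior vertex of $P_{xy}$ lies on a fixed path of $\cQ$ or outside $\bigcup\cQ$, so at most three paths of $\cQ$ besides $Q$ are relevant.

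Now comes the rerouting, modelled on the proof of \Cref{lem:subdivision-intersect}. Given $P_{xy},P_{xy'}\in\cP^*$ whose witnesses lie on $Q$ with that of $P_{xy}$ first, I shortcut the segment of $Q$ between the two witnesses by the detour that runs from the witness of $P_{xy}$ along $P_{xy}$ to $y$, then along $P_{yx}$ to $x$, then along $P_{xy'}$ from $x$ to the witness of $P_{xy'}$. This is a directed path of length at most $7$ whose interior consists of $x$ and $y$ (both outside $\bigcup\cQ$, since $x\notin\bigcup\cQ$ and $y\in S\sm\bigcup\cQ$) together with at most four vertices lying on the at most three relevant paths of $\cQ$. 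If the type makes these four vertices avoid $\bigcup\cQ$, pick $P_{xy},P_{xy'}$ to realise the first and last witnesses of $\cP^*$ along $Q$: the detour is then internally disjoint from $\bigcup\cQ$ while the replaced segment of $Q$ contains all the other witnesses of $\cP^*$, so once $|\cP^*|$ exceeds an absolute constant this strictly shortens the family, a contradiction. If the type instead places some of these four vertices on relevant paths of $\cQ$, I would apply \Cref{prop: Erdos-Szekeres} to $\cP^*$ with the linear orders given by position along $Q$ and along each relevant path of $\cQ$ (at most four orders), passing to a monotone subfamily $\cP''$ of size at least $|\cP^*|^{1/8}$; the monotonicity then permits rerouting $Q$ together with the at most three relevant paths \emph{simultaneously} --- each being shortcut through a short piece of one of our subdivision paths and the hub $x$ --- so that no vertex is reused, endpoints are preserved, and the total number of vertices strictly drops, again contradicting minimality once $|\cP''|$ exceeds an absolute constant.

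Combining: in either case $|\cP^*|$ is bounded by an absolute constant, so the number of dirty $P_{xy}$'s, and symmetrically the number of dirty $P_{yx}$'s, is at most $O(k^4)$, which for $k\ge 2$ is comfortably below $\Clemmapaths{}$ --- the large constant there absorbing the pigeonhole and Erdős–Szekeres losses. The main obstacle is the simultaneous rerouting in the last case: one must select the indices of the monotone subfamily $\cP''$ so that all the path-surgeries are mutually compatible (no vertex reused, endpoints preserved, net length strictly negative), a delicate bookkeeping of the same flavour as --- but heavier than --- the three-path surgery in the proof of \Cref{lem:subdivision-intersect}.
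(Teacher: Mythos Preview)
Your plan is essentially the paper's own argument, packaged slightly differently. Both proceed by contradiction with the minimality of $\cQ$: pigeonhole the offending subdivision paths by the intersection pattern of both $P_{xy}$ and the reverse path $P_{yx}$ with $\cQ$ (you do this in a single ``type'' of size at most $(k+3)^4$; the paper does it in two successive pigeonhole steps, first on the forward family $\cP'$ and then on the reverse family $\cR'$), apply \Cref{prop: Erdos-Szekeres} to extract a monotone subfamily, and then reroute the handful of relevant paths of $\cQ$ through short pieces of the subdivision to save vertices. The arithmetic lands in the same ballpark either way.

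One genuine concern in your sketch: in the hard case you say the relevant paths of $\cQ$ are rerouted ``each being shortcut through a short piece of one of our subdivision paths and the hub $x$''. Read literally this cannot work --- $x$ is a single vertex, so at most one of the rerouted paths may pass through it. The paper resolves this by \emph{cycling} the (up to four) relevant paths $M, N, M', N'$: at each transition it hops along a single edge of some $P_i$ or $R_i$ (possibly via the free endpoint $u_i \in S \setminus \bigcup\cQ$), and only the final transition, closing the cycle from $N'$ back to $M$, is routed through $x$. Your monotone family $\cP''$ supports exactly this cyclic surgery, but not the picture in which each rerouted path individually visits $x$; this is the point at which your sketch must be made precise, and where your single long detour $w \to y \to x \to w'$ has to be broken up and distributed among the several paths being rerouted.
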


\begin{proof}
    This proof follows a very similar argument as in the proof of \Cref{lem:subdivision-intersect}. 
    Let ~$\cP=\{P_1,P_2,\ldots, P_{10^{13}k^{4}}\}$ be a collection of paths from the subdivision $\cS$ each of which has non empty intersection with $\bigcup \cQ$. Moreover, suppose each path of $\cP$ has $x\in S\setminus \bigcup \cQ$ as an initial or terminal vertex. 
    
    We may assume at least half of these paths, say $\cP' := \{P_1,\ldots, P_{10^{12}k^{4}}\}$, start at $x$ (the other case is symmetric). Note that each path in $\cP'$ has length at most $3$, which implies that each such path can have at most $2$ vertices that belong to $\bigcup \cQ$. As argued before, there are at most $2(k+1)^2$ possible patterns regarding intersections with $\bigcup \cQ$. We therefore may pass to a sub-collection of $\cP'$ of size at least $|\cP'|/2(k+1)^2 \ge 10^{12}k^{4}/8k^2 > 10^{11}k^2$ where all paths have the same intersection pattern. 
    With a slight abuse of notation we shall still denote this collection of paths by $\cP'$. Suppose the intersection pattern is given by $M,N \in \cQ$, not both empty. In other words, $M,N$ intersect the second and third vertex, respectively, of each path in $\cP'$. In the following we identify a path in $\cP'$ by its second and third vertices. Thus if $P \in \cP'$ has second vertex $a$ and third vertex $b$, then we shall write $P_{ab}$ for $P$. In this case, we have $a \in M$ and $b \in N$ (one of $M, N$ could be empty).
    Now, we apply the same procedure to the paths from $\cS$ that start at the terminal vertices of the paths in $\cP'$ and which end at $x$. Indeed, let this collection of paths be denoted by $\cR'$. As before, since there are at most $2(k+1)^2$ possible intersection patterns between a path in $\cR'$ and the collection $\bigcup \cQ$, we may pass to a sub-collection of $\cR'$, say $\cR''$ all of whose paths have intersection pattern $M',N'\in \cQ$, where now both $M'$ and $N'$ could be empty. Clearly $|\cR''|\geq |\cP'|/2(k+1)^2\geq 10^{10}$. Let $\cP''$ be the set of paths in $\cP$ which end at an initial vertex of some path in $\cR''$. 
    
    We define two total orderings $<_1, <_2$ on the paths of $\cP''$: $P_{ab}<_1P_{cd}$ if $a$ comes before $c$ in the path $M$, and 
     $P_{ab}<_2 P_{cd}$ if $b$ comes before $d$ in the path $N$.
    By Proposition~\ref{prop: Erdos-Szekeres} applied to the two orderings $<_1$, $<_2$, we may pass to a subset of $\cP''$ of size $(10^{10})^{1/2} \geq  10^{5}$ with an ordering of the paths such that they form an increasing chain in $<_1$ and an increasing or decreasing chain in $<_2$. For simplicity, denote this collection again by $\cP''$. 
    
    Likewise, we may define two total orderings on the paths of $\cR''$ (restricted to those paths with the same endpoints as paths in $\cP''$ and with the induced ordering given by $<_1$): define $P_{ab}<_3 P_{cd}$ if $a$ comes before $c$ in the path $M'$, and 
     $P_{ab}<_4 P_{cd}$ if $b$ comes before $d$ in the path $N'$. Applying Proposition~\ref{prop: Erdos-Szekeres} to $\cR''$, with the orderings $<_1,<_3,<_4$, we obtain a collection of paths of size at least $(10^5)^{1/4} > 12$ which forms an increasing chain in $<_1$ and an increasing or decreasing chain in $<_3$ and $<_{4}$. With a slight abuse of notation we shall still denote this sub-collection by $\cR''$. Let $\cR''=\{R_1,R_2,\ldots, R_{12}\}$ and $\cP''=\{P_1,P_2,\ldots, P_{12}\}$ be the corresponding paths in $\cP'$ where the endpoint of $P_i$ is the initial vertex of $R_i$, for every $i\in [12]$. We shall assume that each of the paths $M, N, M', N'$ are nonempty (otherwise, the following rerouting argument only becomes simpler).
     
     There are eight cases to consider, depending on whether or not $\cP''$ is an increasing or decreasing chain in $<_2$, and whether or not $\cR''$ is an increasing or decreasing chain in $<_3$ and $<_4$. We consider two of these cases (the others follow by a similar arguments):
     \begin{enumerate}
        \item $\cP''$ forms an increasing chain in both $<_1$ and $<_2$; $\cR''$ forms increasing chain in both $<_3$ and $<_4$; \label{case:rerouting-sub-path-1}
         \item $\cP''$ is increasing in $<_1$ and decreasing in $<_2$; $\cR''$ is increasing in $<_3$ and increasing in $<_4$. \label{case:rerouting-sub-path-2}
     \end{enumerate}
     
    Let us consider now Case (\ref{case:rerouting-sub-path-1}).
    We are going to make the following rerouting of the paths $M,N,M'$ and $N'$. More precisely, whenever the path $M$ hits $P_4$, then it goes to the third vertex of $P_4$ and continues via the sub-path of $N$ which starts at the third vertex of $P_4$. Whenever the path $N$ hits $P_3$ (which is before hitting $P_4$, by assumption) then it goes to the second vertex of $R_3$ and continues via $M'$. Similarly, the path $M'$ is altered in the following way: whenever it hits $R_2$ (which is before hitting $R_3$) then it goes to the third vertex of $R_2$ and continues via the path $N'$. Finally, whenever $N'$ hits $R_1$ then it goes to $x$ and then to the second vertex of $P_{12}$ which belongs to $M$ and continues via $M$. Call this new collection of paths $\cQ'$. Note that we have added at most $6$ more edges in total by using the paths in $\cP''$ and $\cR''$, but we now miss all second vertices of the paths $P_5,\ldots, P_{11}$. Therefore we decreased $|\bigcup \cQ'|$, which is a contradiction.
    
   Finally, consider Case (\ref{case:rerouting-sub-path-2}). We perform the following rerouting. Whenever $N$ hits the third vertex of $P_3$, follow $P_3$ to its terminal vertex, then to the second vertex of $R_3$, and continues via $M'$. Now, starting from $M'$, follow $M'$ to the second vertex of $R_2$, follow $R_2$ to its third vertex, then continue via $N'$. Starting from $N'$, follow $N'$ to the third vertex of $R_1$, continue along $R_1$ to $x$, follow $P_{12}$ to its second vertex, and then continue via $M$. Finally, starting from $M$, follow $M$ to the second vertex of $P_1$, follow $P_1$ to its third vertex, and then continue via $N$. Denoting this new collection by $\cQ'$, we note that $\cQ'$ uses $6$ new edges from the paths in $\cP'', \cR''$, but avoids the second vertices of $P_2, \ldots, P_{11}$. It follows that $|\bigcup \cQ'| < |\bigcup \cQ|$, a contradiction.
    \end{proof}

Our next goal is to show that our path system $\cQ$ can be modified such that it does not intersect non-subdivision sets in too many places. Roughly speaking, we will show that, if some path in $\cQ$ intersects some non-subdivision set $S_i$ in many places, then we can transform $\cQ$ into another collection of paths that ($1$) are vertex disjoint and go from $O$ to $Y \cup \{v^*\}$, where $v^*$ is some vertex in $\bigcup_{q=1}^t S_{j_q}$, ($2$) does not intersect subdivision sets in more places than $\cQ$ did, and ($3$) intersects non-subdivision sets in `few' vertices. We formalize this in the following lemma. To state it precisely, we make the following definitions. Let $\cP$ be some path system constructed in the above process from $O$ to $Y \cup \{z\}$ with special vertex $z \in \bigcup_{q=1}^t S_{j_q}$. A vertex $s \in S_{j_q}$ is $\cP$-\emph{free} if no path in $\cP$ intersects $s$. Furthermore, we say that $S_{j_q}$ is $(\cP, l)$-\emph{free} if it contains at least $l$ free vertices.

\begin{lemma}\label{lem:non-sub-intersect}
There exists a family $\cQ^*$ of vertex disjoint directed paths from $O$ to $Y \cup \{v^*\}$, where $v^* \in \bigcup_{q = 1}^t S_{j_q}$ satisfying the following properties:
\begin{enumerate}
    \item $S_{j_q}$ is $(\cQ^*, q)$-free for $1 \le q \le t$.
    \item The paths in $\cQ^*$ do not intersect subdivision sets in more vertices than paths in $\cQ$ do.
\end{enumerate}
\end{lemma}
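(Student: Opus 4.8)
The plan is to reselect the path system by a minimisation and then to run a rerouting argument in the same spirit as the proofs of \Cref{lem:subdivision-intersect} and \Cref{lem: subd-paths-intr}. The structure that plays here the role of the ``free'' internally disjoint subdivision paths used there is the \emph{forward domination} between the non-subdivision sets: since $\cF$ is $(k,\ell)$-good, $S_{j_q}$ and $S_{j_{q+1}}$ satisfy $S_{j_q}\to S_{j_{q+1}}$ in $T$ for every $q$ (this is the orientation recorded along the Hamiltonian path $P^{ns}$ in $H^{ns}$), so any vertex of $S_{j_q}$ is joined to any vertex of $S_{j_{q+1}}$ by a single edge, and hence any vertex of $S_{j_q}$ to any vertex of $S_{j_{q'}}$ with $q'>q$ by a short path passing through exactly one vertex of each intermediate set.

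First I would call a family $\cP$ of $k+1$ pairwise vertex disjoint directed paths \emph{admissible} if it runs from $O$ to $Y\cup\{z\}$ for some $z\in\bigcup_{q=1}^{t}S_{j_q}$ and satisfies $|\bigcup\cP\cap S_i|\le|\bigcup\cQ\cap S_i|$ for every subdivision set $S_i$. The family $\cQ$ is admissible, so we may fix an admissible $\cQ^\ast$ minimising $|\bigcup\cQ^\ast|$. Property~$(2)$ then holds by the definition of admissibility, and it remains to verify property~$(1)$. Suppose it fails: some non-subdivision set $S:=S_{j_q}$ has at most $q-1$ vertices that are $\cQ^\ast$-free, so (using $|S|=12k^2$ and $q\le t\le k$) at least $12k^2-k+1$ vertices of $S$ lie on $\bigcup\cQ^\ast$, and by averaging some single path $Q\in\cQ^\ast$ meets $S$ in a large set of vertices $u_1,\dots,u_s$, listed in the order in which they occur along $Q$.

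The core step, which I would carry out exactly as in the proof of \Cref{lem:subdivision-intersect}, is to use the domination edges leaving $u_1,\dots,u_s$ towards $S_{j_{q+1}}$ — and, when $q>1$, the domination edges from $S_{j_{q-1}}$ back into $S$, as well as the vertex of $O$ in $S$ or the current special endpoint when these are relevant — to splice together two or three of the paths of $\cQ^\ast$. One records the bounded number of intersection patterns that these connecting edges can have with the other paths of $\cQ^\ast$, applies \Cref{prop: Erdos-Szekeres} to pass to a monotone subcollection of absolute constant size, and then performs the corresponding rotation of the paths $Q,M,N$. The result is a new family that is still admissible — it goes from $O$ to $Y\cup\{v^\ast\}$ for some $v^\ast\in\bigcup_qS_{j_q}$, where the special endpoint may have migrated, which is precisely why the lemma permits $v^\ast$ to be an arbitrary vertex of $\bigcup_qS_{j_q}$, and it meets no subdivision set in more vertices than $\cQ$ does because the entire rerouting takes place inside $\bigcup_qS_{j_q}$ — yet it uses strictly fewer vertices, contradicting the minimality of $\cQ^\ast$. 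Since the minimiser thus satisfies $(1)$ for every $q$ at once, the proof is complete.

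The hardest part will be the rotation itself. In contrast with \Cref{lem:subdivision-intersect} and \Cref{lem: subd-paths-intr}, where the connecting paths came from a subdivision and were automatically disjoint from the sets under consideration, here the connectors are single edges living inside (or between) the very sets we are thinning, so one must set up the orderings so that the reassembled paths are genuinely vertex disjoint, are strictly shorter in total, and still join $O$ to $Y$ together with a legitimate special vertex in $\bigcup_qS_{j_q}$. In particular the boundary cases $q=1$ (there is no set $S_{j_{q-1}}$) and $q=t$ (there is no set $S_{j_{q+1}}$, although when the origin is chosen according to case~$(2)$ above we do have $O\subset S_{j_t}$), together with the subcases in which $Q$ is the special path or $Q$ begins in $S$, each require a small separate variant of the rotation; these bookkeeping points are the only genuinely delicate part of the argument.
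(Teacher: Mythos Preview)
Your minimisation-plus-rerouting plan is genuinely different from what the paper does, and as written it has a real gap.

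The paper's argument is not a minimisation at all: it is an explicit forward sweep through $S_{j_1},\dots,S_{j_t}$. At stage $p$ it maintains a path system $\cQ^{p-1}$ whose special vertex $v^{p-1}$ lies in some $S_{j_z}$ with $z\le p-1$, and such that every earlier set $S_{j_{q}}$ carries one more free vertex than it ultimately needs (a ``$+1$ buffer''). If $S_{j_p}$ is hit too often, some non-special path $P$ meets it in $u_1,\dots,u_l$ with $l\ge 2k+1$; the paper then \emph{truncates} $P$ at $u_1$ (this becomes the new special path) and extends the old special path from $v^{p-1}$ forward through one free vertex of each of $S_{j_{z+1}},\dots,S_{j_{p-1}}$ to reach $u_l$, after which it follows the tail of $P$ to $Y$. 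This consumes exactly one free vertex from each earlier set (hence the buffer) and frees $u_2,\dots,u_{l-1}$ in $S_{j_p}$. No Erd\H{o}s--Szekeres is used anywhere; the whole lemma is a two-line pigeonhole plus this single splice, iterated.

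The reason your minimisation scheme runs into trouble is precisely the feature the sweep is designed to control: the \emph{position of the special vertex}. In a vertex-minimal admissible family you have no information about which $S_{j_z}$ contains $v^\ast$. If the smallest failing index is $p$ and $z>p$, the only rerouting available via domination edges runs forward, away from $S_{j_p}$, and you cannot reconnect the old special path to the tail of $Q$; your proposed ``shortcut'' through $S_{j_{q\pm1}}$ needs some other path $M$ to visit both $S_{j_{q-1}}$ and $S_{j_{q+1}}$ in the right order, which nothing guarantees. The analogy with \Cref{lem:subdivision-intersect} also breaks down: there the key objects were the internally disjoint subdivision paths $P_{i,m-i+1}$ pairing $u_i$ with $u_{m-i+1}$ \emph{inside the same set}, and it was their disjointness that drove the Erd\H{o}s--Szekeres argument. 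A non-subdivision set has no such internal structure---all you have are complete bipartite edge-sets \emph{outwards} to the next set---so there is no canonical family of connectors on which to run the monotone-subsequence machinery. The paper's insight is to abandon minimality and instead orchestrate the special vertex so that it always sits behind the set currently being cleaned.
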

\begin{proof}
We consider the sets $S_{j_q}$ in order from $q = 1$ to $t$, and show that we can incrementally free vertices in each set along the way. The process terminates with the desired path system $\cQ^*$. To simplify notation during the course of this proof, we write $S_q$ for $S_{j_q}$ for $q = 1, \ldots, t$. To begin, consider $S_1$ together with the original path system $\cQ$ from $O$ to $Y \cup \{v\}$ with $v \in S_1$. We may assume that $v$ was chosen to be a vertex in $S_1$ with out-degree at least $(|S_1) - 1)/2$ in $S_1$. Since the $S_q$'s are a part of a $(k, \ell)$-good family, we have $|S_q| = 12k^2$ (see \Cref{def:good-family}). Moreover, we discarded at most half of the vertices of the last non-subdivision set $S_{t}$, so $|S_t| \ge 12k^2/2 = 6k^2$. Now, if $\bigcup \cQ$ intersects $S_1$ in at most $|S_1| - 2$ vertices, then $S_1$ is $(\cQ, 2)$-free. Otherwise, $\bigcup \cQ$ intersects $S_1$ in at least $|S_1| - 1$ vertices, and hence intersects $N := N^+_{T_1}(v)$ in at least $|N| - 1 \ge |S_1|/2 - 1 \ge 3k^2 - 1$ vertices. Thus some path $P \in \cQ$ intersects $N$ in at least $3k^2 - 1/(k + 1) > 2k - 1$ vertices. Let $u_1, \ldots, u_l$ be the vertices in the intersection in their order along $P$ with $l \ge 2k$. If $P$ is the special path in $\cQ$, then replace $P$ with $P' = Pu_1$, so that $u_1$ is the new special vertex. Then the vertices $u_2, \ldots, u_l$ are free. Otherwise, $P$ is not a special path. Let $Q \neq P$ denote the special path in $\cQ$ with terminal vertex $v \in S_1$. Replace $Q$ with $Pu_1$ and let $u_1$ be the new special vertex. Replace $P$ with the path $P'$ defined by following $Q$ to $v$, going along the edge $vu_l$, then following $P$ to its endpoint in $Y$. We have thus freed vertices $u_2, \ldots u_{l-1}$, and since $l \ge 2k \ge 4$, we have freed at least $2$ vertices. 

In any case, we denote the resulting collection of paths and special vertex by $\cQ^1$ and $v^1 \in S_1$, respectively. Observe that $S_1$ is now $(\cQ^1, 2)$-free.

Now, suppose $2 \le p < t$, and that we have already constructed a family of paths $\cQ^{p-1}$ with special vertex $v^{p-1} \in S_z$ where $1 \le z \le p-1$ satisfying the following properties:
\begin{itemize}
    \item $S_q$ is $(\cQ^{p-1}, q)$-free for all $2 \le q \le z-1$.
    \item $S_q$ is $(\cQ^{p-1}, q+1)$-free for all $z \le q \le p-1$.
\end{itemize}

We show how to construct $\cQ^p$. If the $k+1$ paths in $\cQ^{p-1}$ intersect $S_p$ in less than $5k^2$ vertices, then, recalling that each non-subdivision set has size at least $6k^2$, there are at least $6k^2 - 5k^2 \ge k + 1 \ge p + 1$ free vertices in $S_p$. Therefore, we may set $\cQ^p = \cQ^{p-1}$ and set $v^p = v^{p-1}$, and note that $S_p$ is $(\cQ^p, p + 1)$-free.

Otherwise, there is
some path $P$ which intersects $S_p$ in at least $5k^2/(k+1) \ge 2k + 1$ vertices. Write these vertices in the intersection as $u_1, \ldots, u_{l}$ in the order they appear in $P$ with $l \ge 2k + 1$. If $P$ is the special path of $\cQ^{p-1}$, then $\cQ^p$ is simply formed by setting $v^p = u_1$ (the first intersection with $S_p$) and following $P$ to $v^p$. Thus we may assume that $P$ is not the special path of $\cQ^{p-1}$.
Now construct the following new paths. Follow the special path in $\cQ^{p-1}$ to $v^{p-1} \in S_z$. As each $S_q$ is a non-subdivision set, and these sets are part of a $(k,\ell)$-good family, by definition we have $S_z \rightarrow \ldots \rightarrow S_p$. Accordingly, we may go from $v^{p-1}$ to $u_{l}$ using free vertices, and then follow $P$ to its endpoint in $Y$. Call this path $P'$. Our new special path is formed by following $P$ to $u_1$ and setting $v^p = u_1$. Let $\cQ^p$ be the resulting family of paths. We have thus freed vertices $u_2, \ldots, u_{l-1}$ for a total of $l - 2 \ge 2k - 1 \ge k + 1 \ge p + 1$ vertices in $S_p$. Thus $\cQ^p$ satisfies the following properties:
\begin{itemize}
    \item $S_q$ is $(\cQ^p, q)$-free for all $1 \le q \le p-1$.
    \item $v^p \in S_p$ and $S_p$ is $(\cQ^p, p+1)$-free.
\end{itemize}
Indeed, the second item above is clear by construction, and the first item holds because $S_q$ is $(\cQ^{p-1}, q+1)$-free for $z \le q \le p - 1$, and the new path $P'$ uses precisely one free vertex from each of these sets. Thus $\cQ^p$ satisfies the desired properties. 

Finally, if the origin set $O$ was chosen as a subset of $S_t$, then note that $O$ remains invariant in this process and therefore is maintained as a free set of $k+1$ vertices in $S_t$. Therefore, we terminate this process with $v^* = v^{t-1}$ and $\cQ^* = \cQ^{t-1}$. On the other hand, if $O \subset \ter(P^s)$, then we repeat the above procedure to the set $S_t$ yielding $v^* = v^t$ and $\cQ^* = \cQ^{t}$. By the same argument we can guarantee at least $k + 1 \ge t + 1$ free vertices in $S_t$, completing the proof of the lemma.
\end{proof}

\subsection{Finishing the proof}\label{subsec:completing-2}
In the previous subsection we showed that there is a system $\cQ$ of pairwise vertex disjoint paths from $O$ to $Y \cup \{v\}$, for some $v \in \bigcup_{q=1}^t S_{j_q}$, that do not intersect any of the $S_i$'s in many vertices.
We shall use these \emph{free} vertices to extend the paths in $\cQ$ and obtain the desired pairwise vertex disjoint paths from $x_i$ to $y_i$, for $i \in [k]$, thus finishing the proof of \Cref{thm:main}.

Recall that we have assumed that $\ell \geq \Clbound{}$.
For each $i \in [k]$, let $z_i$ be the vertex in $O$ such that there is a path in $\cQ$ starting at $z_i$ and ending at $y_i$, and let $S'_{i} = S_{i} \setminus \bigcup \cQ$.
First, we need the following lemma, which says that the set $S'_{i_r}$ is, in a certain way, highly linked.
\begin{lemma}\label{lem:subdivisions_are_linked}
For any two disjoint sets of vertices $\{u_1, \ldots, u_k\}$ and $\{v_1, \ldots, v_k\}$ in $S'_{i_p}$, we can find pairwise vertex disjoint paths joining each $u_i$ to $v_i$ using only vertices of the subdivision sitting on $S'_{i_p}$ and avoiding the vertices of $\cQ$.
\end{lemma}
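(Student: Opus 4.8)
The plan is to link each $u_i$ to $v_i$ not along the direct subdivision path but through a carefully chosen intermediate branch vertex $w_i$, so that the resulting paths are automatically internally disjoint from one another and can be steered clear of $\bigcup\cQ$. Write $\cS$ for the copy of $T_2\ora{K}_\ell$ sitting on $S_{i_p}$ and, for distinct branch vertices $a,b$, let $P_{ab}$ denote the directed path of $\cS$ from $a$ to $b$; recall that each $P_{ab}$ has length at most $3$, that its interior consists of non-branch vertices, and that distinct ordered pairs of branch vertices yield internally disjoint paths. Since $S_{i_p}$ is a subdivision set it has exactly $\ell$ branch vertices, so by \Cref{lem:subdivision-intersect} at most $\Clemmabranches$ of them lie in $\bigcup\cQ$ and hence $|S'_{i_p}|\ge \ell - \Clemmabranches$.

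I would then choose $w_1,\dots,w_k\in S'_{i_p}$ greedily. When it is time to pick $w_i$, this vertex must avoid: the at most $3k-1$ vertices $u_1,\dots,u_k,v_1,\dots,v_k,w_1,\dots,w_{i-1}$; every $w\in S'_{i_p}$ for which $P_{u_iw}$ meets $\bigcup\cQ$, of which there are at most $\Clemmapaths$ by \Cref{lem: subd-paths-intr} applied with $x=u_i$; and every $w\in S'_{i_p}$ for which $P_{wv_i}$ meets $\bigcup\cQ$, again at most $\Clemmapaths$ by the same lemma applied with $x=v_i$. Since
\[
    |S'_{i_p}| - (3k-1) - 2\cdot\Clemmapaths \;\ge\; (\ell-\Clemmabranches) - (3k-1) - 2\cdot\Clemmapaths \;=\; \ell - \bigl(\Clbound\bigr) + 1 \;\ge\; 1,
\]
where we used the standing assumption $\ell\ge\Clbound$, a legal choice of $w_i$ always remains.

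Finally I would set $R_i$ to be the concatenation of $P_{u_iw_i}$ with $P_{w_iv_i}$. Each $R_i$ is a genuine directed path from $u_i$ to $v_i$ contained in $\cS$: the two pieces meet only in $w_i$, because they correspond to distinct ordered pairs of branch vertices and their interiors contain no branch vertices, and $R_i$ avoids $\bigcup\cQ$ by the choice of $w_i$. For disjointness: the only branch vertices on $R_i$ are $u_i,w_i,v_i$, and the greedy choices make the triples $\{u_i,w_i,v_i\}$ pairwise disjoint over $i\in[k]$; moreover, for $i\ne j$ the four subdivision paths $P_{u_iw_i},P_{w_iv_i},P_{u_jw_j},P_{w_jv_j}$ correspond to four pairwise distinct ordered pairs of branch vertices (all the $u$'s, $v$'s and $w$'s being distinct), so by internal disjointness of $\cS$ they share no non-branch vertex. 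Hence the $R_i$ are pairwise vertex disjoint and lie in $\cS\setminus\bigcup\cQ$, which is what we wanted.

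The only step requiring real care is controlling how many of the subdivision paths ending at a fixed branch vertex can be blocked by $\bigcup\cQ$; this is precisely what \Cref{lem: subd-paths-intr} delivers, and the resulting loss of $\Clemmabranches + 2\cdot\Clemmapaths$ branch vertices (on top of the $O(k)$ endpoints already specified) is exactly the reason for the bound $\ell\ge\Clbound$.
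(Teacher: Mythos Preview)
Your argument is correct and follows essentially the same route as the paper: link each $u_i$ to $v_i$ through an intermediate branch vertex $w_i$, using \Cref{lem:subdivision-intersect} and \Cref{lem: subd-paths-intr} to bound the number of unusable intermediates, and pick the $w_i$ greedily. If anything, you are more careful than the paper in spelling out why the resulting paths $R_i$ are pairwise vertex disjoint.
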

\begin{proof}
It follows from Lemmas~\ref{lem:subdivision-intersect}~and~\ref{lem: subd-paths-intr} that for any two vertices $u$ and $v$ in $S'_{i_p}$ there are at least $\ell - 2k - \Clemmabranches{} - 2\cdot \Clemmapaths{} \geq k$ vertices $w$ in $S'_{i_p} \setminus \{u_1, \ldots, u_k, v_1, \ldots, v_k \}$, such that we can go from $u$ to $w$ and from $w$ to $v$ via two paths of the subdivision and avoiding the vertices of $\cQ$.
Therefore, we can find a system of pairwise vertex disjoint paths by greedily choosing paths with the desired properties.
\end{proof}
Our goal now is to find a system of pairwise vertex disjoint paths $\cQ'$ joining $x_i$ to $z_i$, for each $i \in [k]$, that do not use any vertices of $\cQ$ or $Y$.
It is easy to see that if we find such a system then we are done: for each $i$ we can simply go from $x_i$ to $z_i$ via a path of $\cQ'$ and then from $z_i$ to $y_i$ using a path from $\cQ$, thus obtaining a path starting at $x_i$ and ending at $y_i$.
To achieve this goal, we have to consider two cases depending on whether $O$ is in $\ter(P^{ns})$ or in $\ter(P^{s})$.\\

\paragraph{\textbf{Case 1}: $O \subset \ter(P^{ns})$}

Using the fact that, for each $q \in [t]$, $|S'_{j_q}| \geq q$ (by Lemma~\ref{lem:non-sub-intersect}), and the property that $S'_{j_q} \rightarrow S'_{j_{q+1}}$ for each $q < t$, we can greedily find pairwise vertex disjoint paths from $x_{j_q}$ to $z_{j_q}$ using only vertices in $\bigcup_{q \in [t]} S'_{j_q}$.

On the other hand, to a find a path from $x_{i_p}$ to $z_{i_p}$ where $p \in [r]$, we shall use the property that $z_{i_p}$ has many in-neighbours in $S'_{i_r}$.
Indeed, as $O \subset \ter(P^{ns})$ we must have $i_rj_t \in E(H)$ in the auxiliary digraph $H$, by construction. \hyperlink{discarded}{Recall} that we have discarded those vertices in $S_{j_t}$ which have few in-neighbours in $S_{i_r}$, which means that
each of the vertices $z_{i_1}, \ldots, z_{i_r} \in O$ have at least
\[
    \ell/2 - \Clemmabranches{} \ge k
\]
in-neighbours in $S_{i_r}'$ (where we have applied \Cref{lem:subdivision-intersect}). Thus we may find $r$ distinct vertices $z_{i_1}', \ldots, z'_{i_r} \in S'_{i_r}$ such that $z_{i_1}' \rightarrow z_{i_1}, \ldots, z'_{i_r} \rightarrow z_{i_r}$.

Now, it follows from Lemma~\ref{lem:subdivision-intersect} and the definition of the auxiliary digraph $H$ that at least $\ell/2 - \Clemmabranches{} \geq 2k$ vertices in $S'_{i_p}$ have at least $\ell/2 - \Clemmabranches{} \geq 2k$ out-neighbours in $S'_{i_{p+1}}$, and hence, with a help of Lemma~\ref{lem:subdivisions_are_linked} which we use to arrive to vertices in $S'_{i_p}$ with high out-degrees in $S'_{i_{p+1}}$, we can greedily find pairwise vertex disjoint paths from $\{x_{i_1}, \ldots , x_{i_r}\}$ to $Z$, for some $Z \subseteq S'_{i_r} \setminus \{z'_1, ..., z'_r\}$, again using only vertices in $\bigcup_{p \in [r]} S'_{i_p}$.
Finally, using Lemma~\ref{lem:subdivisions_are_linked} we can appropriately link $Z$ to $z'_{i_1}, ..., z'_{i_r}$ to obtain a system of pairwise vertex disjoint paths from $x_{i_p}$ to $z'_{i_p}$ for every $p \in [r]$. Using the fact that $z'_{i_p} \rightarrow z_{i_p}$, we obtain the desired paths from $x_{i_p}$ to $z_{i_p}$ for each $p \in [r]$.

\paragraph{\textbf{Case 2}: $O \subset \ter(P^{s})$}
In this case, we must have $j_ti_r \in E(H)$, so each vertex in $S'_{j_{t}}$ has at least $\ell/2 - \Clemmabranches{} \geq 2k$ out-neighbours in $S'_{i_r}$. As before, we can greedily find pairwise vertex disjoint paths from $x_1, ..., x_k$ to $Z$, for some $Z \subseteq S'_{i_r} \setminus \{z_1,...,z_k\}$, using only vertices in $\bigcup_{i \in [k]} S'_{i}$.
Again, using Lemma~\ref{lem:subdivisions_are_linked} we can appropriately link $Z$ with $\{z_1, ..., z_{k}\}$ and obtain a system of pairwise vertex disjoint paths from $x_{i}$ to $z_{i}$ for every $i \in [k]$.

In each case, we have found the required collection of vertex disjoint directed paths linking $x_i$ to $y_i$ for each $i \in [k]$. This completes the proof of \Cref{thm:main}.\qed

\section{Constructions}\label{section:constructions}
\subsection{There exist $(2k-1)$-connected tournaments with large minimum out-degree which are not $k$-linked}\label{subsection:2k_necessary}

For all integers $k\geq 2$ and $m\geq 2k$, we construct a tournament $T$ on $n$ vertices which is $(2k-1)$-connected and whose minimum out-degree and in-degree is at least $m$, but which is not $k$-linked. 

Indeed, let $T$ be a tournament on vertex set $V = A \cup B \cup X \cup Y\cup C$, where $X = \{x_1,...,x_k\}$, $Y = \{y_1, ..., y_k\}$, $|C| = k-1$ and $|X| = |Y|= (n - 3k + 1)/2$ and whose edges are oriented in the following way.
\begin{enumerate}
    \item The edges within $A$, $B$, $C$ are oriented arbitrarily.
    \item The edges within $X$ and $Y$ are oriented so that $T[X]$ and $T[Y]$ form a $2m$-connected tournament.
    \item All edges are oriented from $Y$ to $X$, from $A$ to $C$, from $C$ to $B$, from $X$ to $C$, from $C$ to $X$, from $Y$ to $A$ and from $B$ to $X$.
    \item All edges are oriented from $A$ to $B$ except for edges between $x_i$ and $y_i$, for each $i\in [k]$.
    \item The edges between $A$ and $X$ are oriented in such a way that every vertex in $A$ sends at least $m$ out-edges to $X$ and $m$ in-edges to $X$. Similarly, the edges between $B$ and $Y$  are oriented in such a way that every vertex in $A$ sends at least $m$ out-edges to $Y$ and $m$ in-edges to $Y$. 
\end{enumerate}

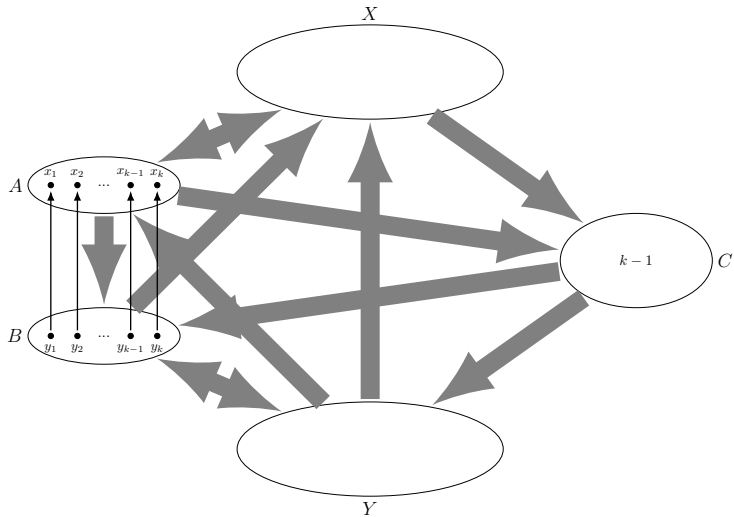
\begin{figure}[h]
    \centering
    \scalebox{0.5}{
        \begin{tikzpicture}
    \newcommand{\cVertexSize}{0.5mm}
    \newcommand{\cFatArrowColor}{gray}              
    \newcommand{\cThinArrowColor}{black}              
    \newcommand{\cMiddleBlobsVerticalOffset}{6}     
    \node[ellipse, minimum height=1.5cm,minimum width=4cm, draw=black, label=left:\Large{$A$}](blobA) at (-7, 2) {...};
    
    \node[ellipse, minimum height=1.5cm,minimum width=4cm, draw=black, label=left:\Large{$B$}](blobB) at (-7,-2) {...};
    
    \node[ellipse, draw=black, minimum height=2.5cm,minimum width=7cm, label=\Large{$X$}](blobX) at (0,5) {};
    
    \node[ellipse, minimum height=2.5cm,minimum width=7cm, draw=black, label=below:\Large{$Y$}](blobY) at (0,-5) {};
    
    \node[ellipse, minimum height=2.5cm,minimum width=4cm, draw=black, label=right:\Large{$C$}](blobC) at (7,0) {$k-1$};
    
    \begin{scope}[->, >=latex, color=\cFatArrowColor, line width=5mm, shorten >=2pt,shorten <=2pt]
        \draw (blobX) edge (blobC);
        \draw (blobC) edge (blobY);
        \draw (blobY) edge (blobX);
        \draw (blobA) edge (blobC);
        \draw (blobC) edge (blobB);
        \draw (blobY) edge (blobA);
        \draw (blobB) edge (blobX);
        \draw (blobA) edge (blobB);
        \draw [<->] (blobA) edge (blobX);
        \draw [<->] (blobB) edge (blobY);
    \end{scope}
    
    \node[circle, draw=black, fill=black, inner sep=\cVertexSize, label=\small{$x_1$}](x1) at (-7-1.4, 2) {};
    \node[circle, draw=black, fill=black, inner sep=\cVertexSize, label=\small{$x_2$}](x2) at (-7 - 0.7, 2) {};
    \node[circle, draw=black, fill=black, inner sep=\cVertexSize, label=\small{$x_{k-1}$}](xk1) at (- 7 + 0.7, 2) {};
    \node[circle, draw=black, fill=black, inner sep=\cVertexSize, label=\small{$x_{k}$}](xk) at (- 7 + 1.4, 2) {};
    
    \node[circle, draw=black, fill=black, inner sep=\cVertexSize, label=below:\small{$y_1$}](y1) at (-7-1.4, -2) {};
    \node[circle, draw=black, fill=black, inner sep=\cVertexSize, label=below:\small{$y_2$}](y2) at (-7 - 0.7, -2) {};
    \node[circle, draw=black, fill=black, inner sep=\cVertexSize, label=below:\small{$y_{k-1}$}](yk1) at (- 7 + 0.7, -2) {};
    \node[circle, draw=black, fill=black, inner sep=\cVertexSize, label=below:\small{$y_{k}$}](yk) at (- 7 + 1.4, -2) {};

    \begin{scope}[->, >=Latex, color=\cThinArrowColor, line width=1pt, shorten >=2pt,shorten <=2pt]
        \draw (y1) edge (x1);
        \draw (y2) edge (x2);
        \draw (yk1) edge (xk1);
        \draw (yk) edge (xk);
    
    \end{scope}
\end{tikzpicture}
    }
    \caption{A $(2k-1)$-connected tournament with large minimum in/out-degree that is not $k$-linked.}
    \label{fig:construction_2_k}
\end{figure}

We need to prove the following three properties of $T$. 
\begin{enumerate}
    \item $T$ is $(2k-1)$-connected.
    \item $T$ has minimum-out and in-degree at least $m/2$.
    \item There do not exist $k$ vertex-disjoint paths joining $x_i$ to $y_i$, for each $i\in [k]$. 
\end{enumerate}
\begin{proof}
Suppose $T$ is not $(2k-1)$-connected. Then there exists a subset $W\subset T$ of size at most $2k-2$ such that $T\setminus W$ is not connected. First, we show that $C\subseteq W$. If not, there must exist $z\in C\setminus W$ and it is not hard to see that every vertex can reach $z$ within $T\setminus W$ and every vertex can be reached from $z$, which is a contradiction. Hence, we may assume $C\subset W$. Note then that $|(A\cup B)\cap W|\leq k-1 $, which, in particular, implies that neither $A$ nor $B$ can be fully contained within $W$. Let $x_i\in A\setminus W$ and $y_{j_1},y_{j_2}\in B\setminus W$. It is easy to see that $x_1$ can reach every vertex in $T\setminus W$, since it can certainly reach $X\setminus W,C\setminus W$. Moreover, it can reach either $y_{j_1}$ or $y_{j_2}$, and then via one of these vertices, it can reach $Y\setminus W$. A similar argument shows that any vertex can reach $x_i$, which is a contradiction.

It is easy to see that every vertex $x$ has $d^{+}(x)\geq m/2$. 
Finally, we need to show there do not exist $k$ vertex disjoint paths joining $x_i$ to $y_i$. Observe that any path from $x_i$ to $y_i$ can not use any vertex of $(B\setminus{y_i})\cup (A\setminus{x_i})$ and therefore it must use a vertex of $C$. But since $|C|<k$ this is not possible. 

\end{proof}

\subsection{There exist $(2.5k-1)$-connected tournaments which fail to be $k$-linked}\label{subsection:construction}

We shall now show that for each $k \geq3$ and any sufficiently large $n$ there exist $(5k-1)$-connected tournaments on $n$ vertices that are not $2k$-linked, which shows that the minimum out-degree condition in our theorem is necessary.

Let $T$ be a tournament on vertex set $V = X \cup Y \cup S \cup W$, where $X = \{x_1,...,x_k\}$, $Y = \{y_1, ..., y_k\}$, $|S| = 4k-1$ and $|W| = n - 6k + 1$, and whose edges are oriented in the following way.
\begin{enumerate}
    \item The edges inside each of $X$, $Y$, $S$, $W$, and between $S$ and $W$ are oriented in such a way that $T[S \cup W]$ is $(5k-1)$-connected (for large enough $n$, a random configuration of edges in $S \cup W$ will have this property), and both $T[X]$ and $T[Y]$ are strongly connected.
    \item All edges are oriented from $X$ to $Y$, from $Y$ to $W$, and from $W$ to $Y$.
    \item For every $i$, all the edges are oriented from $x_i$ to $S$ except for the edge between $x_i$ and $y_i'$ for some unique vertex $y'_i$.
    \item For every $i$, all the edges are oriented from $S$ to $y_i$ except for the edge between $x'_i$ and $y_i$ for some unique vertex $x'_i \not\in \{y'_1, ..., y'_k\}$.
\end{enumerate}

\begin{figure}[h]
    \centering
    \scalebox{0.5}{
        \begin{tikzpicture}
    \newcommand{\cVertexSize}{0.5mm}
    \newcommand{\cFatArrowColor}{gray}              
    \newcommand{\cThinArrowColor}{black}              
    \newcommand{\cMiddleBlobsVerticalOffset}{6}     
    \newcommand{\drawVertex}[4]{
        \node[circle, draw=black, fill=black, inner sep=\cVertexSize, label=\small{#1}](#2) at (#3, #4) {};
    }
    \newcommand{\drawVertexBelow}[4]{
        \node[circle, draw=black, fill=black, inner sep=\cVertexSize, label=below:\small{#1}](#2) at (#3, #4) {};
    }
    \node[ellipse, minimum height=1.5cm,minimum width=4cm, draw=black, label=left:\Large{$X$}](blobX) at (-\cMiddleBlobsVerticalOffset, 0) {...};
    
    \node[ellipse, minimum height=1.5cm,minimum width=4cm, draw=black, label=right:\Large{$Y$}](blobY) at (\cMiddleBlobsVerticalOffset,0) {...};
    
    \node[ellipse, draw=black, minimum height=2.5cm,minimum width=10cm, label=\Large{$W$}](blobW) at (0,5) {};
    
    \node[ellipse, minimum height=2.5cm,minimum width=7cm, draw=black, label=below:\Large{$S$}](blobS) at (0,-6) {...};
    
    \begin{scope}[->, >=latex, color=\cFatArrowColor, line width=5mm, shorten >=15pt,shorten <=15pt]
        \draw (blobX) edge (blobY);
        \draw (blobY) edge (blobW);
        \draw (blobW) edge (blobX);
        \draw [<->] (blobW) edge (blobS);
        \draw (blobX) edge (blobS);
        \draw (blobS) edge (blobY);
    \end{scope}
    
    \drawVertex{$x_1$}{x1}{-\cMiddleBlobsVerticalOffset - 1}{0}
    \drawVertex{$x_2$}{x2}{-\cMiddleBlobsVerticalOffset - 0.5}{0}
    \drawVertex{$x_{k-1}$}{xk1}{-\cMiddleBlobsVerticalOffset + 0.5}{0}
    \drawVertex{$x_{k}$}{xk}{-\cMiddleBlobsVerticalOffset + 1}{0}
    
    \drawVertex{$y_1$}{y1}{\cMiddleBlobsVerticalOffset - 1}{0}
    \drawVertex{$y_2$}{y2}{\cMiddleBlobsVerticalOffset - 0.5}{0}
    \drawVertex{$y_{k-1}$}{yk1}{\cMiddleBlobsVerticalOffset + 0.5}{0}
    \drawVertex{$y_{k}$}{yk}{\cMiddleBlobsVerticalOffset + 1}{0}
    
    \drawVertexBelow{$y_1'$}{y1p}{-3}{-6}
    \drawVertexBelow{$y_2'$}{y2p}{-2.5}{-6}
    \node at (-2.0,-6) {...};
    \drawVertexBelow{$y_{k-1}'$}{yk1p}{-1.5}{-6}
    \drawVertexBelow{$y_{k}'$}{ykp}{-1}{-6}
    \drawVertexBelow{$x_1'$}{x1p}{1}{-6}
    \drawVertexBelow{$x_2'$}{x2p}{1.5}{-6}
    \node at (2.0,-6) {...};
    \drawVertexBelow{$x_{k-1}'$}{xk1p}{2.5}{-6}
    \drawVertexBelow{$x_{k}'$}{xkp}{3}{-6}
    
    \begin{scope}[->, >=Latex, color=\cThinArrowColor, line width=1pt, shorten >=2pt,shorten <=2pt]
        \draw (y1p) edge (x1);
        \draw (y2p) edge (x2);
        \draw (yk1p) edge (xk1);
        \draw (ykp) edge (xk);
        
        \draw (y1) edge (x1p);
        \draw (y2) edge (x2p);
        \draw (yk1) edge (xk1p);
        \draw (yk) edge (xkp);
    \end{scope}
\end{tikzpicture}
    }
    \caption{Example of a $(5k-1)$-connected tournament that is not $2k$-linked.}
    \label{fig:construction_2_5_k}
\end{figure}

\begin{claim}\label{claim_T_is_not_linked}
    $T$ is not $2k$-linked.
\end{claim}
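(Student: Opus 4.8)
The plan is to show that $T$ is not $2k$-linked by exhibiting a particular bad linkage request, namely the pairs $(x_i, y_i)$ and $(y_i', x_i')$ for $i \in [k]$, and arguing there cannot be $2k$ pairwise vertex-disjoint paths realising it. Concretely, consider the two sets $\{x_1, \dots, x_k, y_1', \dots, y_k'\}$ and $\{y_1, \dots, y_k, x_1', \dots, x_k'\}$, and suppose for contradiction that there are disjoint paths $P_i$ from $x_i$ to $y_i$ and $R_i$ from $y_i'$ to $x_i'$ for each $i$. All these paths must pass through $S$, which has only $4k - 1$ vertices, so the pigeonhole principle forces two of them to be vertex-disjoint from $S$ entirely; I will argue that any such path is impossible, giving the contradiction.

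First I would analyse which vertices a path from $x_i$ to $y_i$ can use. By construction, the only out-edges leaving $x_i$ go to $Y$ (toward $y_i'$ via the single reversed edge) or to $S$; and actually more carefully, $x_i \to Y$ and $x_i \to S$ except for the edge $y_i' \to x_i$. So from $x_i$ one either steps into $S$ or into $Y$. Once a path reaches $Y$ it can only reach $W$ or stay in $Y$ (since $Y \to W$, $W \to Y$, and $X \to Y$ means $X$ is unreachable from $Y$), and from $W$ one can only return to $Y$; crucially $W \to X$ means one cannot get back to an $x$-vertex, and there is no edge from $W$ or $Y$ back into $S$. Hence a path from $x_i$ to $y_i$ that avoids $S$ is confined to $X \cup Y \cup W$; but the only way into the $Y$-side of things from $x_i$ is the single edge $x_i \to y_i'$ (wait — we need to recheck the orientation: all edges from $x_i$ to $S$ except the edge between $x_i$ and $y_i'$, but $y_i' \in S$, and that exceptional edge is oriented $y_i' \to x_i$). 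So in fact $x_i \to S \setminus \{y_i'\}$ and $x_i \to Y$; a path avoiding $S$ must start $x_i \to Y$, then move within $Y \cup W$, and reach $y_i$, which is possible since $T[Y]$ is strongly connected. So a path $P_i$ avoiding $S$ is possible in isolation. The real obstruction must come from the $R_i$ paths: a path from $y_i'$ (which lies in $S$) to $x_i' \in S$ — its first vertex $y_i'$ is in $S$, so it does not avoid $S$. Thus the $R_i$'s always meet $S$, so it is among the $P_i$'s that two must avoid $S$, and I then need a parity/counting obstruction on the endpoints $x_i', x_j'$: both $R_i$ and $R_j$ use $S$ but so do the remaining $P$'s, and a careful count of how $S$ is partitioned among the paths — each $R_i$ uses at least $y_i'$ and eventually $x_i'$ — shows the $k$ paths $R_i$ already consume the $2k$ distinct vertices $\{x_i'\} \cup \{y_i'\}$ of $S$, leaving only $4k - 1 - 2k = 2k - 1$ vertices of $S$ for the $k$ paths $P_i$, forcing at least one $P_i$ — in fact two, after a sharper count — to avoid $S$.

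The cleaner route, which I would actually write up, is this: the $2k$ paths are vertex-disjoint and all of $\{x_i'\}_{i} \cup \{y_i'\}_{i}$ (which are $2k$ distinct vertices of $S$) are used, as endpoints of the $R_i$'s; so these account for $2k$ vertices of $S$, and at most $4k - 1 - 2k = 2k-1$ further vertices of $S$ are available. Now I claim each path $P_i$ must use at least one vertex of $S$ not among the $x_j', y_j'$: indeed a path $P_i$ avoiding $S$ entirely is confined to $X \cup Y \cup W$, but to leave $X$ it must use $x_i \to y_i'$ — no wait, that edge goes the wrong way. Let me instead observe $x_i$'s out-neighbourhood is $Y \cup (S \setminus \{y_i'\})$, so $P_i$ starting at $x_i$ and avoiding $S$ goes to $Y$; fine. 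But then consider: in that case $P_i \subseteq X \cup Y \cup W$ and uses no vertex of $S$. Then the $k$ paths $R_i$ plus however many $P_i$'s touch $S$ must fit $2k$ endpoint-vertices plus their internal vertices into $|S| = 4k-1$; the main point is that each $R_i$ needs, besides $y_i'$ and $x_i'$, possibly more, and a disjointness argument shows the $R_i$'s alone cannot be routed because between $y_i'$ and $x_i'$ inside $S \cup W$ there isn't enough room once all $2k$ of the special vertices are pinned — here I would invoke that $T[S\cup W]$ being only $(5k-1)$-connected is a red herring and instead the genuine obstruction is that any path from $y_i'$ must leave $S$ only into $W$ (since $S$'s only out-neighbours outside $S$ are in $W$, as $S \to Y$ only reaches $y$-vertices and $Y$ is a sink-ish side), come back, etc., and ultimately all $k$ paths $R_i$ compete for passage, but I expect the slick finish is simply: the $R_i$'s use the $2k$ vertices $y_i', x_i'$; the $P_i$'s that touch $S$ use the remaining $\le 2k-1$ vertices; so at least one $P_i$ avoids $S$; such a $P_i$ lies in $X \cup Y \cup W$; but $y_j' \in S$ so $R_j$ must enter $W$ to travel, and counting the interface between $\{P_i \text{ in } X\cup Y\cup W\}$ and $W$-usage of the $R_j$'s in $W$ yields the contradiction. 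The step I expect to be the genuine obstacle is pinning down exactly this last counting argument — isolating which of the $2k$ paths are forced to avoid $S$ and why that is impossible given the edge orientations into and out of $X$, $Y$, and $W$; the rest (the reachability analysis from each $x_i$ and $y_i'$, and the pigeonhole on $|S| = 4k-1$) is routine.

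\begin{proof}[Proof sketch of approach]
Assume toward a contradiction that $T$ is $2k$-linked, and apply this to the two disjoint $2k$-sets
$\{x_1,\dots,x_k,y_1',\dots,y_k'\}$ and $\{y_1,\dots,y_k,x_1',\dots,x_k'\}$,
obtaining pairwise vertex-disjoint paths $P_i$ from $x_i$ to $y_i$ and $R_i$ from $y_i'$ to $x_i'$. One first checks, directly from the orientation rules, that the out-neighbourhood of each $x_i$ outside $X$ lies in $Y\cup(S\setminus\{y_i'\})$, that $X$ is unreachable from $Y\cup W$, and that the only edges leaving $S$ to the rest of the graph go to $Y\cup W$ with $S\cap(X\cup Y\cup W)$-structure forcing any path leaving $S$ to pass through $W$. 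In particular a path $P_i$ that contains no vertex of $S$ is contained in $X\cup Y\cup W$. Since the $2k$ vertices $y_1',\dots,y_k',x_1',\dots,x_k'$ all lie in $S$ and are used by the $R_i$, and $|S|=4k-1$, at most $2k-1$ further vertices of $S$ are available to the $P_i$; hence some $P_i$ uses no vertex of $S$, so $P_i\subseteq X\cup Y\cup W$. Tracking how the remaining paths (in particular the $R_j$, which must traverse $W$) interact with $W$, and combining with the disjointness of all $2k$ paths, yields a contradiction with the sizes involved, completing the proof.
\end{proof}
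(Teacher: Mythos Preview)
Your proposal has a fatal error at the very first step: you picked the wrong linkage request. With the pairs $(x_i,y_i)$ and $(y_i',x_i')$ that you chose, the linkage \emph{can} be realised in $T$. Indeed, since all edges go from $X$ to $Y$, the single edge $x_iy_i$ is already a path from $x_i$ to $y_i$, and these $k$ edges are pairwise disjoint and lie entirely in $X\cup Y$. For the remaining $k$ pairs $(y_i',x_i')$, both endpoints lie in $S$, and the construction allows the edges inside $S\cup W$ to be oriented so that $T[S\cup W]$ is (say) a large random tournament, which is not only $(5k-1)$-connected but also $k$-linked; hence one can find $k$ pairwise disjoint paths $R_i$ from $y_i'$ to $x_i'$ entirely inside $S\cup W$. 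The two families of paths are disjoint because they live in disjoint vertex sets. So your request does not witness non-linkedness, and this is why your argument never closes: there is no contradiction to reach.

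The paper instead asks for paths from $x_i$ to $y_i'$ and from $x_i'$ to $y_i$. The point is that the edge between $x_i$ and $y_i'$ is oriented \emph{towards} $x_i$, and the edge between $x_i'$ and $y_i$ is oriented \emph{towards} $x_i'$, so neither path can be a single edge. Since the out-neighbourhood of $x_i$ lies in $X\cup Y\cup S$ (never $W$, because $W\to X$), the second vertex of any $x_i$--$y_i'$ path is an extra vertex of $X\cup Y\cup S$; symmetrically the penultimate vertex of any $x_i'$--$y_i$ path is an extra vertex of $X\cup Y\cup S$. Thus each of the $2k$ paths uses at least three vertices of $X\cup Y\cup S$, forcing at least $6k$ vertices there, while $|X\cup Y\cup S|=6k-1$. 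That is the entire argument; once you choose the correct pairs, no routing analysis through $W$ is needed at all.
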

\begin{proof}
    Observe that for each $i$ any path joining $x_i$ to $y_i'$ must use an extra vertex from $X \cup S \cup Y$.
    The same holds from paths going from $x_i'$ to $y_i$.
    Hence, any system of disjoint paths joining $x_i$ to $y_i'$ and $x_i'$ to $y_i$, for every $i$, uses at least $6k$ vertices in $X \cup Y \cup S$.
    But this cannot happen, as by construction $|X \cup Y \cup S| = 6k-1$.
\end{proof}

\begin{claim}\label{claim_T_highly_connected}
    $T$ is $(5k-1)$-connected.
\end{claim}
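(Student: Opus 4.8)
The plan is to show that deleting any set $Z \subseteq V(T)$ with $|Z| \le 5k-2$ leaves $T$ strongly connected. First I would record the two structural facts that make the argument go through: (i) $T[S \cup W]$ is $(5k-1)$-connected by construction, so $(S \cup W) \setminus Z$ is nonempty and strongly connected; and (ii) in $T$ we have $X \to Y$, $Y \leftrightarrow W$ (both directions of edges between $Y$ and $W$ are present — actually $Y \to W$ and $W \to Y$ by item (2), so there is at least one edge each way unless fully deleted), every $x_i$ sends all but one edge into $S$, and every $y_i$ receives all but one edge from $S$. I would fix $Z$ with $|Z| \le 5k-2$, set $R = (S \cup W) \setminus Z$, and note $R \ne \varnothing$ and $T[R]$ strongly connected by (i).

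The core of the argument is to show every surviving vertex can reach $R$ and be reached from $R$; strong connectivity of $T \setminus Z$ then follows by routing through $R$. For a surviving vertex $w \in W \setminus Z$ or $s \in S \setminus Z$ there is nothing to do since it lies in $R$. For a surviving $x_i$: it has $|S| - 1 = 4k-2$ out-neighbours in $S$ among edges oriented $x_i \to S$ (all of $S$ except possibly $y_i'$, but $y_i' \in S$ here — wait, $y_i'$ is a vertex of $S$), so after deleting at most $5k-2$ vertices it still has an out-neighbour in $S \cap R$ provided $4k - 2 > 5k - 2$, which fails. So instead I would argue more carefully: $x_i$ has $\ge |S| - 1 = 4k - 2$ out-neighbours in $S$; to kill all of them $Z$ must contain $\ge 4k - 2$ vertices of $S$, leaving $\le (5k-2) - (4k-2) = k$ vertices of $Z$ outside $S$. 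But if $S \subseteq Z$ then $|Z| \ge 4k-1 > \ldots$; one must track this bound precisely, but the point is that either $x_i$ reaches $S \cap R$ directly, or almost all of $S$ is deleted and then $R \subseteq W$, and $x_i$ reaches $W$ via $x_i \to (\text{the one vertex }y_i' \text{ of }S)$? No — better: if $x_i$ has no out-neighbour in $R$, then since $x_i \to Y$, $x_i$ reaches some $y_j \in Y \setminus Z$ (if $Y \subseteq Z$ then $|Z| \ge k$, which is fine, but we need $Y \not\subseteq Z$ — not guaranteed), and $Y \to W$ gives a route into $R$. The clean way is: since every vertex of $T$ has an out-edge to $S \cup W$ that survives unless $\ge 5k-1$ vertices are deleted from a set of size $\ge 5k-1$, careful counting shows one such edge survives. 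Symmetrically for in-edges, using $S \to y_i$ and $W \to Y$, $W \to X$.

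The main obstacle I anticipate is the bookkeeping in the worst case where $Z$ concentrates on $S$ (which has only $4k-1$ vertices): then $S$ can be entirely inside $Z$, destroying all the "$x_i \to S$" and "$S \to y_i$" connections simultaneously, and one must fall back on the long routes $X \to Y \to W$ and $W \to Y$, $W \to X$ through the surviving part of $W$, checking that $W \setminus Z \ne \varnothing$ (true since $|W| = n - 6k+1$ is large) and that $Y \setminus Z \ne \varnothing$ is not actually needed because $W$ reaches $X$ directly and $X$ reaches $W$ via $Y$ only if some $y_j$ survives — so I would instead observe $x_i \to y_i'$? No, $y_i' \in S$. The resolution: when $S \subseteq Z$ we have $|Z \setminus S| \le k - 1$, so at most $k-1$ vertices of $X \cup Y$ are deleted, hence some $y_j \in Y \setminus Z$ survives and $x_i \to y_j \to (W \setminus Z)$ works, and dually $(W \setminus Z) \to x_i$ directly via $W \to X$. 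Once this case split is organized — $S \cap Z$ large versus small — each branch is a short routing argument, and I would present it as: verify $R \ne \varnothing$, verify every survivor reaches $R$ (two sub-cases), verify every survivor is reached from $R$ (symmetric), conclude.
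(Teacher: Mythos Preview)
Your overall strategy matches the paper's: delete $Z$ with $|Z|\le 5k-2$, use that $R=(S\cup W)\setminus Z$ is strongly connected, and show every surviving vertex can reach and be reached from $R$. However, your case analysis for ``$x_i$ reaches $R$'' has a genuine gap. You correctly note that $x_i$ has $4k-2$ out-neighbours in $S$ (all of $S$ except $y_i'$), and that if these are all killed then $|Z\cap S|\ge 4k-2$, leaving at most $k$ vertices of $Z$ outside $S$. But your ``resolution'' only treats the sub-case $S\subseteq Z$, where $|Z\setminus S|\le k-1$ forces some $y_j$ to survive and $x_i\to y_j\to W\setminus Z$. You miss the set $Z=(S\setminus\{y_i'\})\cup Y$, which has exactly $5k-2$ elements: here $S\not\subseteq Z$ (so your case does not fire), yet $Y\subseteq Z$ and the sole surviving vertex of $S$ is $y_i'$, which is an \emph{in}-neighbour of $x_i$. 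In this configuration every out-neighbour of $x_i$ outside $X$ has been deleted, so no route via $Y$ or directly into $S$ exists.

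The paper closes precisely this worst case by invoking a hypothesis you never use: $T[X]$ is strongly connected. Since in this case $X'=X$, the vertex $x_i$ can reach some $x_j$ with $j\ne i$ inside $X$, and because the vertices $y_1',\dots,y_k'$ are distinct we have $x_j\to y_i'$, giving the path $x_i\to x_j\to y_i'\in R$. Without this step your plan does not go through. (A side remark: the edges between $W$ and $X,Y$ are $Y\to W$ and $W\to X$, as the figure shows; the phrase ``from $W$ to $Y$'' in the construction is a typo, so your momentary reading of $Y\leftrightarrow W$ should be discarded.)
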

\begin{proof}
    Let $T'$ be an tournament obtained by removing any $5k-2$ vertices from $T$.
    We shall show that $T'$ is still connected.
    Let us write $X'$, $Y'$, $W'$, $S'$ for $X \cap T'$, $Y \cap T'$, $W \cap T'$, $S \cap T'$, respectively.
    By construction $W \cup S$ is $(5k-1)$-connected, therefore $W' \cup S'$ is still connected and hence every vertex in $T'$ can be reach via a directed path from $W' \cup S'$. Therefore, it remains to show that (1) there is a path between any vertex in $X$ and $W' \cup S'$, and (2) a path between $W' \cup S'$ and any vertex in $Y$. We will only prove (1) as the proof of (2) is symmetrical.
    
    Take any $x_i \in X'$.
    Observe that if $Y' \neq \emptyset$ or $S' \neq \{y_i'\}$ then we can easily find a path from $x_i$ to $W' \cup S'$.
    We can therefore assume that $Y' = \emptyset$ and $S' = \{y_i\}$, and therefore $X' = X$.
    By construction $x_i$ is the only vertex in $X'=X$ which does not send an out-edge to $y_i'$, hence $x_i$ can reach $y_i'$ using any other vertex in $X'=X$.
    
\end{proof}
\section{Final remarks}\label{sec:final_remarks}
An analysis of our methods shows that there is an absolute constant $C > 0$ such that any $(2k+1)$-connected tournament with minimum out-degree at least $Ck^{31}$ is $k$-linked. 
We remark that we did not make a strong effort to optimize the power of $k$ in the minimum out-degree condition. 
While we believe that we could bring its value down, we were unable to obtain a linear bound, which we conjecture is the truth. 
\begin{conjecture}\label{conj:linear_degree}
    There exists a constant $C>0$ such that every $(2k+1)$-connected tournament with minimum out-degree at least $Ck$ is $k$-linked.
\end{conjecture}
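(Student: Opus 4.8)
The plan is to retain the three-part architecture of the proof of \Cref{thm:main} --- a structural lemma producing a $(k,\ell)$-good family (\Cref{lem:structure}), a Menger-type path system $\cQ$ cleaned so that it meets each block of the family only a few times (Lemmas~\ref{lem:subdivision-intersect}--\ref{lem:non-sub-intersect}), and a final routing that links each $x_i$ to $y_i$ through the family --- while replacing every step that currently costs a power of $k$ by one that costs only a constant factor. The current requirement $\delta^+(T)\ge k\cdot\CWibound{}+2k$ with $\ell=\Theta(k^4)$ decomposes into three multiplicative sources of loss: (i) the leading factor $k$, from reserving $k$ pairwise disjoint out-neighbourhoods $W_i\subset N^+(x_i)$, each of which must be large; (ii) the factor $k^{22}$ inside $\CWibound{}$, whose exponent is chosen precisely so that $10\cdot(9/10)^{22}\le 1$, i.e.\ so that one recursive step of \Cref{lem:structure} --- which splits $[k]$ into parts of size $\ge k/10$ while shrinking each $W_i$ by a factor $10$ --- preserves the hypothesis; and (iii) the factor $\ell^2$, together with $\ell=\Theta(k^4)$, which is forced by $\Clemmapaths{}$ in \Cref{lem: subd-paths-intr}.

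For (ii) I would replace the unbalanced $(9k/10,k/10)$ split by a near-balanced one. From a failed greedy embedding of a $T_2\ora{K}_\ell$ one extracts a bipartition $Y\rightarrow X$ of the host tournament with $|X|,|Y|\ge|T|/5-O(\ell^2)$; the only defect is that the induced bipartition of the \emph{index} set $[k]$ need not be balanced. One remedy is to iterate the dichotomy inside the larger index part, peeling off subdivision blocks as one proceeds; a cleaner one is to set up the decomposition globally --- pick one representative per $W_i$, run a median/local-order argument on this $k$-vertex tournament to obtain a linear order on the blocks up to a bounded ``exceptional'' set that will host subdivisions, and verify that along that order consecutive blocks become ordered in $T$ after passing to linear-size subsets of each. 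A balanced split losing only a factor $2$ would already replace the exponent $22$ by $\lceil\log_2 10\rceil=4$.

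For (iii), the rerouting lemmas currently force a shortcut only once a single path of $\cQ$ meets a block $S$ in $\Omega(k^2)$ vertices, because one first pigeonholes over $\Theta((k+1)^2)$ intersection patterns and then needs $\ge 3^4$ survivors to run \Cref{prop: Erdos-Szekeres}, each reroute then being asked to free $\Omega(k)$ vertices at once. I would instead carry out many \emph{local} reroutings along the path: whenever a path returns to $S$ a bounded number of times within a short window, among the constantly many subdivision paths of length $\le 3$ joining those returns one is free, or has both interior vertices on a single other path of $\cQ$, or repeats an intersection pattern with two other paths, and in each case a single swap frees $\Theta(1)$ vertices; iterating disjointly along the path brings the per-path bound --- and hence $\Clemmabranches{}$, $\Clemmapaths{}$, $\ell$, and the non-subdivision block size $12k^2$ --- all down to $O(k)$, collapsing $\ell^2$ from $\Theta(k^8)$ to $\Theta(k^2)$.

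Even with (ii) and (iii) carried out one is left with $\delta^+(T)=k\cdot O(k^{O(1)})\cdot O(k^2)$, i.e.\ still cubic, and I expect source (i) to be the genuine obstacle. Its root is that a subdivision of $\ora{K}_{\Theta(k)}$ with bounded-length paths --- the structure all of our routing relies on --- already has $\Theta(k^2)$ internal vertices, so any single such subdivision forces quadratic room and reserving a private one per pair forces cubic. A linear bound therefore seems to require a structure that links $\Theta(k)$ prescribed pairs using only $O(k)$ vertices, which cannot be a complete-digraph subdivision at all; the natural hope is a directed analogue of the Thomas--Wollan argument~\cite{ThomasWollan}, which derives $k$-linkedness of an undirected graph from $2k$-connectivity together with a \emph{linear} average-degree hypothesis, recast with $\delta^+(T)\ge Ck$ in the role of the density hypothesis. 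Absent such an idea, optimizing the present framework appears to stop at roughly $\delta^+(T)=\Theta(k^3)$.
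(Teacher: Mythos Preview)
The statement you are addressing is \Cref{conj:linear_degree}, which in the paper is explicitly a \emph{conjecture}, not a theorem: the authors state that they ``were unable to obtain a linear bound, which we conjecture is the truth'', and no proof is offered. So there is no ``paper's own proof'' to compare against.

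Your submission is not a proof either, and you are candid about this: you lay out a programme for squeezing the polynomial exponent down by (ii) balancing the recursive split in \Cref{lem:structure} and (iii) tightening the rerouting Lemmas~\ref{lem:subdivision-intersect}--\ref{lem: subd-paths-intr}, and then you correctly identify (i) --- the $\Theta(k^2)$ internal vertices inherent in any $T_2\ora{K}_{\Theta(k)}$, multiplied by the $k$ disjoint neighbourhoods $W_i$ --- as the hard obstruction, concluding that the present framework ``appears to stop at roughly $\delta^+(T)=\Theta(k^3)$''. This is a sound diagnosis and is consistent with the paper's own remark that optimising their argument could lower the exponent but not reach linear. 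Your suggestion that a genuinely new idea, perhaps a directed analogue of the Thomas--Wollan density argument, would be needed is plausible but speculative. In short: your analysis of the bottlenecks is reasonable, but what you have written is a research outline, not a proof, and the conjecture remains open both in the paper and in your proposal.
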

In Subsection~\ref{subsection:2k_necessary}, we showed that one cannot replace $2k+1$ by $2k-1$ in Theorem~\ref{thm:main}, as there exist arbitrarily large tournaments which are $(2k-1)$-connected with large minimum out and in-degree, but fail to be $2k$-linked.
We have not ruled out the possibility that the connectivity condition can be relaxed to $2k$, however.
It is therefore natural to ask the following.
\begin{question}\label{question:2k_or_2kplus1}
    Does Theorem~\ref{thm:main} still hold if we replace $2k+1$ by $2k$?
\end{question}

Note that an affirmative answer to this question would completely resolve Conjecture~\ref{conj:pokrovskiy} in a stronger form, in the sense of not additionally requiring large minimum in-degree.




\bibliographystyle{amsplain}
\bibliography{main.bib}

\providecommand{\bysame}{\leavevmode\hbox to3em{\hrulefill}\thinspace}
\providecommand{\MR}{\relax\ifhmode\unskip\space\fi MR }
\providecommand{\MRhref}[2]{%
  \href{http://www.ams.org/mathscinet-getitem?mr=#1}{#2}
}
\providecommand{\href}[2]{#2}
\begin{thebibliography}{10}

\bibitem{BollobasThomason}
B.~Bollob{\'a}s and A.~Thomason, \emph{Highly linked graphs}, Combinatorica
  \textbf{16} (1996), no.~3, 313--320.

\bibitem{ErdosHajnal}
P.~Erd\H{o}s and A.~Hajnal, \emph{On complete topological subgraphs of certain
  graphs}, Ann. Univ. Sci. Budapest \textbf{7} (1969), 193--199.

\bibitem{Kawarabayashi-et-al}
J.~Fr{\"o}lich, K~Kawarabayashi, T.~M{\"u}ller, J.~Pott, and P.~Wollan,
  \emph{{L}inkages in {L}arge {G}raphs of {B}ounded {T}ree-{W}idth},
  arXiv:1402.5549.

\bibitem{girao-snyder}
A.~Gir\~{a}o and R.~Snyder, \emph{Highly linked tournaments with large minimum
  out-degree}, J. Combin. Theory Ser. B \textbf{139} (2019), 251--256.

\bibitem{GiraoSnyderPopielarz}
A.~Gir\~ao, R.~Snyder, and Popielarz K., \emph{Subdivisions of digraphs in
  tournaments}, arXiv:1908.03733.

\bibitem{KangKim}
D.~Kang and J.~Kim, \emph{On 1-factors with prescribed lengths in tournaments},
  to appear in J. Combin. Theory Ser. B,
  https://doi.org/10.1016/j.jctb.2019.06.003.

\bibitem{KomlosSzemeredi}
J.~Koml\'os and E.~Szemer\'edi, \emph{Topological cliques in graphs {II}},
  Combin. Probab. and Comput. \textbf{5} (1996), 79--90.

\bibitem{Lap_Kuhn_Osthus_Patel}
D.~K{\"u}hn, J.~Lapinskas, D.~Osthus, and V.~Patel, \emph{Proof of a conjecture
  of {T}homassen on {H}amilton cycles in highly connected tournaments}, Proc.
  Lond. Math. Soc. \textbf{109} (2014), no.~3, 733--762.

\bibitem{MaderConjecture}
W.~Mader, \emph{On topological tournaments of order 4 in digraphs of outdegree
  3}, J. Graph Theory \textbf{21} (1996), no.~4, 371--376.

\bibitem{Pokrovskiytourn}
A.~Pokrovskiy, \emph{Highly linked tournaments}, J. Combin. Theory Ser. B
  \textbf{115} (2015), 339--347.

\bibitem{ThomasWollan}
R.~Thomas and P~Wollan, \emph{An improved extremal function for graph
  linkages}, European J. Combin. \textbf{26} (2005), 309--324.

\bibitem{Thomassen-2-linked}
C.~Thomassen, \emph{2-linked graphs}, European J. Combin. \textbf{1} (1980),
  371--378.

\bibitem{Thomassen_tourn}
\bysame, \emph{Connectivity in tournaments}, Graph Theory and Combinatorics, a
  Volume in Honour of Paul Erd\H{o}s, Academic Press, London, 1984,
  pp.~305--313.

\bibitem{Thomassen_construct}
\bysame, \emph{Highly connected non-$2$-linked digraphs}, Combinatorica
  \textbf{11} (1991), no.~4, 393--395.

\end{thebibliography}
\end{document}